\setlist[enumerate,1]{label=\alph*)}
\setlist[enumerate,2]{label=\roman*)}
\newtheorem{theorem}{Theorem}
\newtheorem{remark}{Remark}
\newtheorem{definition}{Definition}
\newtheorem{prop}{Proposition}
\newcommand*\diff{\mathop{}\!\mathrm{d}}
\title{Weak solution of the merged mathematical equations of the polluted atmosphere}
\author{
\large{\bf{Donatella Donatelli} and \bf{N\'ora Juh\'asz}
\thanks{The research of DD and NJ leading to these results has received funding from the European Union's Horizon 2020 Research and Innovation Programme under the Marie Sklodowska-Curie Grant Agreement No 642768 (Project Name: ModCompShock).}} \\[1ex]
\normalsize Department of Information Engineering, Computer Science and Mathematics \\
\normalsize University of L'Aquila \\
\normalsize 67100 L'Aquila, Italy. \\
\normalsize {donatella.donatelli@univaq.it}, {norjuh@univaq.it}
}
\date{}
\begin{document}

\maketitle

\begin{abstract}

Considered as a geophysical fluid, the polluted atmosphere shares the shallow domain characteristics with other natural large-scale fluids such as seas and oceans. This means that its domain is excessively greater horizontally than in the vertical dimension, leading to the classic hydrostatic approximation of the Navier-Stokes equations. The authors of the \cite{azerad2001mathematical} article have proved a convergence theorem for this model with respect to the ocean, without considering pollution effects. The novelty of this present work is to provide a generalisation of their result translated to the atmosphere, extending the fluid velocity equations with an additional convection-diffusion equation representing pollutants in the atmosphere. 

\end{abstract}

{\bf Keywords:} Navier-Stokes equations, shallow domains, pollution evolution equation, hydrostatic approximation, compactness, weak solutions.

\newpage

\tableofcontents
\newpage

\section{Introduction} \label{Introduction}

The key aspects of capturing the dynamics of either water flow in oceanography or atmospheric changes in meteorology are the following two fundamental concepts that underlie many modern asymptotic models aiming to describe them. The first one is that both phenomena can be viewed as a fluid dynamics process, and as such, they are well described by the Navier-Stokes equations. The second is the notion of the so-called shallow domains. The latter is a widely used concept in the field of large-scale geophysical fluids and it takes advantage of their approximately flat structure when modelling them with the Navier-Stokes equations. Essentially, the basic principle is to exploit the fact that when we are working on domains such as the ocean or the atmosphere, we are working on an "almost" two-dimensional set, and thus we can significantly simplify the model in the vertical direction, or even eliminate the third dimension and arrive to a limit model in two dimensions.
A common practice for example is to depth-integrate the Navier-Stokes equations for big lakes and seas and obtain the shallow water equations.
Our approach is different, we keep the vertical velocity and will arrive to a three dimensional limit model.
We use the
\[ \epsilon = \frac{\text{characteristic depth}}{\text{characteristic width}} \]
aspect ratio and the fact that, as the above described general ideas suggest, it is natural to consider asymptotic models and to take the limit model as $\epsilon$ goes to zero.

We consider the phenomena of a contaminant being emitted into, mixing with, and being deposited from the clean air. This topic is in the intersection of widely used fields such as mathematical fluid dynamics, atmosphere modelling, and meteorology; for a profound insight see e.g. the books \cite{arya2001introduction}, \cite{pedlosky2013geophysical} and \cite{temam2001navier}.

Our focus is to create a self-contained model describing the atmospheric flow combined with the pollution effects and to investigate the asymptotic behaviour of the weak solutions of this model. The model itself, new to our knowledge, utilises classical concepts as its foundation and it merges them with relatively new or more specific ideas. On the one hand, we use general and well-known approaches such as the wind flow being modelled by the incompressible anisotropic Navier-Stokes equations, and using a convection-diffusion equation to describe the pollution effect. On the other hand, the model also incorporates notions like using a Gaussian pulse as a source term (\cite{zhuk2016source}), and rescaling the diffusivity parameters according to the largely different horizontal and vertical scales (similarly to the way the viscosity terms are scaled in \cite{azerad2001mathematical}).

Specifically, as for the limit behavior of the weak solutions of the model we construct, we show that the weak solutions of this model converge to a weak solution of the hydrostatic limit model --- in other words, taking the limit we arrive to a justification of the hydrostatic model for the case of the polluted atmosphere, where, instead of the originally used wind momentum equation, the hydrostatic air pressure assumption is used to describe the process in the vertical dimension. Since the polluted atmosphere as a merged physical phenomenon can naturally be seen as the air-analogous version of the salty sea, with this convergence result we are providing an extension of the main theorem of \cite{azerad2001mathematical} in the sense that we obtain a result describing the atmospheric flow combined with pollution effects, while the authors of \cite{azerad2001mathematical} provide a similar result for the ocean excluding salinity.

This paper is organised in the following way. In Section \ref{eqsdescribingatmpluspollution} we describe the physical model of the polluted atmosphere and the scaling leading to the hydrostatic equations. In Section \ref{weakformulationmainresult} we introduce the function spaces we work in, and we present the weak formulation and the main theorem. The main theorem's proof is described in Section \ref{proof}. A few concluding remarks can be found in Section \ref{concludingremarks}, finally in the appendix of the article we present an existence result for the hydrostatic equations of the polluted atmosphere.

We end up this section with some notations we will use through the paper.

\subsection{Notations}

\begin{enumerate}

\item $\epsilon = $ the aspect ratio,
\item $\nu = $ viscosity,
\item $\Omega_\epsilon, \Omega = $ the original and the rescaled ($\epsilon$-independent) domain, respectively,
\item $\nabla$ stands for the gradient vector; specifically, $(\partial_x, \partial_y, \partial_z)$ before, and $(\partial_1, \partial_2, \partial_3)$ after rescaling,
\item $\nabla_\nu = (\nu_x^{1/2} \partial_x, \nu_y^{1/2} \partial_y, \nu_z^{1/2} \partial_z)$ on $\Omega_\epsilon$, $\nabla_\nu = (\nu_1^{1/2} \partial_1, \nu_2^{1/2} \partial_2, \nu_3^{1/2} \partial_3)$ on the rescaled domain $\Omega,$
\item $ \Delta_\nu$ stands for the anisotropic Laplacian operators $\nu_x \partial^2_{xx} + \nu_y \partial^2_{yy} + \nu_z \partial^2_{zz}$ and $\nu_1 \partial^2_{11} + \nu_2 \partial^2_{22} + \nu_3 \partial^2_{33}$ on the original and rescaled domains, respectively,
\item $\mathbf{g}$ represents the force due to gravity, which also includes the centrifugal effect,
\item $\varphi = $ the gravity potential term, i.e. $\mathbf{g} = \nabla \varphi,$
\item $\mathbf{w}$ represents the Earth rotation angular speed, $l(y) = $ latitude,
\item $\alpha = 2 f \sin (l(x_2))$, $\beta = 2 f \cos (l(x_2)),$
\item $\mathbf{K}$ represents the diffusion matrix,
\item $( \cdot,\cdot) = $ the scalar product in $L^2(\Omega)^d$ or the duality $L^p (\Omega)$, $L^{p'} (\Omega),$
\item $\mathbf{u}_H$ = the horizontal velocity components $(u_1, u_2),$
\item $b(\mathbf{u}_H)$ = $\alpha (-u_2 , u_1),$
\item $L^p_t L^q_x$ is the abbreviated notation for $L^p (0, T; L^q(\Omega)).$

\end{enumerate}

\section{The polluted atmosphere model} \label{eqsdescribingatmpluspollution}

We begin by choosing a coordinate system, set the domain we start with and an appropriate set of equations describing both atmosphere motions and the effect of pollution. We ignore large scale effects such as the curvature of the Earth. For describing the effect of pollution we use a continuity equation written in Cartesian coordinates for the concentration.

We fix a local Cartesian coordinate system where the axes are independent from the wind ($x, y$ and $z$ are oriented towards east, north, and upwards, respectively). Other possibilities and considerations regarding choosing the coordinate system are described briefly in the concluding remarks.

The domain we consider is a local slice of the atmosphere filled with air that we assume to be incompressible, and a pollutant of concentration $C,$ therefore it is defined by the set
\begin{equation}
\Omega_{\epsilon} = \{(x,y,z) \in \mathbb{R}^3 ; (x,y) \in \omega, 0 < z < \epsilon h(x,y) \},
\end{equation}
where $\omega$ is a Lipschitz-domain in $\mathbb{R}^2$, $h$ is a nonnegative Lipschitz-continuous function, and $\epsilon$ incorporates the depth of the domain.

The equations governing the air velocity $\mathbf{v}$ and pressure $q$ in the atmosphere are the incompressible Navier-Stokes equations, in which we use different viscosities according to vertical or horizontal directions, and the density is taken to be identically equal to one:
\begin{equation} \label{originalEQ-V}
\partial_t \mathbf{v} + (\mathbf{v} \cdot \nabla)\mathbf{v} -\Delta_{\nu} \mathbf{v} + \nabla q + 2 \mathbf{w} \times \mathbf{v}= \mathbf{g} \text{ in } \Omega_{\epsilon} \times (0,T)
\end{equation}
\begin{equation} \label{originalEQ-I}
\nabla \cdot \mathbf{v} = 0 \text{ in } \Omega_{\epsilon} \times (0,T)
\end{equation}
Contaminants emitted into the atmosphere are transported by the wind, mixed and dispersed by turbulent behaviour, and deposited onto the ground by gravity. The dispersion of pollutants is commonly modelled by a diffusive equation of the form
\begin{equation} \label{diffusive}
\partial_t P + \mathbf{v} \cdot \nabla P = \nabla \cdot (\mathbf{K} \nabla P) + Q,
\end{equation}
where $P$ is the pollution concentration and $\mathbf{K}$ is the $3 \times 3$ diffusion matrix --- the diagonal terms represent the strength of the diffusion in the $x, y, z$ directions, while the non-diagonal terms reflect the correlation of random motions between each pair of principal directions.

Here $Q$ represents the sources of the pollutant in the atmosphere i.e. its emission, removal from the atmosphere by dry and wet deposition, and chemical reactions.

Now, we perform a vertical scaling to make the domain independent of $\epsilon.$

\[x=x_1, \quad y=x_2, \quad z=\epsilon x_3. \]

For simplicity of notation we will use $\partial_i = \partial_{x_i} $ for $i = 1,2,3.$

We have the following new, $\epsilon$ - independent domain

\[ \Omega = \{(x_1,x_2,x_3) \in \mathbb{R}^3 ; (x_1,x_2) \in \omega, 0 < x_3 < h (x_1 , x_2) \}.\]

The corresponding scaling regarding the kinematic, pressure and viscosity terms are

\[v_x = u_1^{\epsilon}, \quad v_y = u_2^{\epsilon}, \quad v_z = \epsilon u_3^{\epsilon}, \quad p = p^{\epsilon}, \]
\[\nu_x = \nu_1, \quad \nu_y = \nu_2 , \quad \nu_z = \epsilon ^ 2\nu_3, \]
where the latter embodies the different eddy viscosity hypothesis that we mentioned earlier, and $p$ is the pressure term including the gravity potential $\varphi,$ i.e., $p = q - \varphi$. Basically the scaling of the viscosity parameters can be heuristically deduced from the viscosity dimensions: $[\nu_{x_i}] = \text{typical length}_{x_i}^2 / \text{typical time}$, more details on the subject can be found in \cite{besson1992some}, \cite{azerad2001mathematical}.

Note that from this point we have $\nabla = (\partial_1, \partial_2, \partial_3) $ and $ \Delta_\nu = \nu_1 \partial^2_{11} + \nu_2 \partial^2_{22} + \nu_3 \partial^2_{33} $.

We use an analogous rescaling methodology for the diffusivity constants in (\ref{diffusive}),

\begin{gather}
\mathbf{K}
=
\begin{bmatrix} K_{xx} & K_{xy} & K_{xz} \\ K_{yx} & K_{yy} & K_{yz} \\ K_{zx} & K_{zy} & K_{zz} \end{bmatrix}
=
\begin{bmatrix}
M_{11} & M_{12} & \epsilon M_{13} \\ M_{21} & M_{22} & \epsilon M_{23} \\ \epsilon M_{31} & \epsilon M_{32} & \epsilon^2 M_{33}
\end{bmatrix},
\end{gather}
where

\begin{gather}
\mathbf{M}
=
\begin{bmatrix}
M_{11} & M_{12} & M_{13} \\ M_{21} & M_{22} & M_{23} \\ M_{31} & M_{32} & M_{33}
\end{bmatrix}.
\end{gather}

Here we emphasise that the diffusivity matrix $\mathbf{M}$ is assumed to have the coercivity property
\begin{equation} \label{coercivity}
\lambda \norm{\mathbf{x}}^2 \leq (\mathbf{M} \mathbf{x}, \mathbf{x})
\end{equation}
for all $\mathbf{x} \in \mathbb{R}^3$ with the coercivity constant $\lambda > 0.$ In other words we require that $\lambda = \min {\lambda_i} > 0$ where $\lambda_i$ are the eigenvalues of $\mathbf{M}.$

Note that we do not assume the air to be homogeneous, but for completeness we highlight that the specific case of homogeneous air corresponds to $\mathbf{K}$ taking the form

\begin{gather} \nonumber
\mathbf{K}
=
\begin{bmatrix}
1 & 0 & 0 \\ 0 & 1 & 0 \\ 0 & 0 & \epsilon^2
\end{bmatrix}.
\end{gather}

Finally we obtain the rescaling equations for the pollution concentration and the source term by considering their physical dimensions, namely: $\text{typical mass} / \text{typical volume},$ from which we deduce
\begin{equation}
P = \frac{1}{\epsilon} C^\epsilon, \quad Q = \frac{1}{\epsilon} S^\epsilon.
\end{equation}
In this model we will use localised point sources as suggested in \cite{zhuk2016source} and we use what is called a parametrised approximated delta function $\delta_\kappa$ (a Gaussian distribution, specifically) to describe the source term $S^\epsilon$.

Let us consider a contaminant plume with intensity $I$ originating at emission coordinates $\mathbf{x}_s$ beginning at time $t_s$. At the beginning we assume zero concentration.
An abrupt emission is modelled by a source term represented as a product of an approximated delta function $\delta_\kappa (\mathbf{x} - \mathbf{x}_s)$ (centered at $\mathbf{x}_s$ with a constant parameter $\kappa$) and a scaled Heaviside function $s$ centred at time (switching from 0 to $I$ at time $t_s.$)
In fact, this source term models the continuous emission of the contaminant at constant rate and fixed location.
Basically $s(t;t_s)$ is a function which activates the source, namely $s(t;t_s) := IH(t - t_s).$ We arrive to the complete source term model of the form
\begin{equation} \label{approxsourceterm}
S_\kappa (t, t_s, x, x_s) = s(t; t_s) \delta_\kappa (\mathbf{x} - \mathbf{x}_s),
\end{equation}
while in the limit model the source term $S$ naturally stands for the Dirac $\delta$ distribution.

Eventually, as we mentioned before, we use a Gaussian to approximate the delta function to represent the source, so it is convenient to reformulate (\ref{approxsourceterm}) in terms of $\epsilon.$ In particular, we choose the latter to be in a fixed proportion to the standard deviation parameter $\sigma$ of the Gaussian distribution describing the source (specifically, $2\sigma^2 = \epsilon^2$). In other words we set $\kappa$ to be $\epsilon$ and the following function will give shape to the source term:

\[\delta_\epsilon (\mathbf{x}-\mathbf{x_s}) = \gamma \epsilon^{-3} e ^ {\frac{-\abs{\mathbf{x}-\mathbf{x_s}}^2}{\epsilon^2}};\]
and finally
\begin{equation} \label{sourcedefinition}
S^\epsilon = I H(t-t_s) \gamma \epsilon^{-3} e ^ {\frac{-\abs{\mathbf{x}-\mathbf{x_s}}^2}{\epsilon^2}},
\end{equation}
where $\gamma$ is a normalising constant. Later we will take the limit as $\epsilon$ goes to zero, and the above setting will result in having the $\delta$ distribution representing the hydrostatic source term, which is a physically natural choice.

Putting this condition and the previously described rescaling terms together to the original equations (\ref{originalEQ-V})-(\ref{diffusive}), we arrive to the merged anisotropic equations of the polluted atmosphere:
\begin{equation} \label{ANSC1}
\partial_t u^\epsilon _1 + \mathbf{u}^\epsilon \cdot \nabla u^\epsilon _1 -\Delta_\nu u^\epsilon _1 -\alpha u_2^\epsilon + \epsilon \beta u_3^\epsilon + \partial_1 p^\epsilon = 0 \text{ in } \Omega \times (0,T)
\end{equation}
\begin{equation} \label{ANSC2}
\partial_t u_2^\epsilon + \mathbf{u}^\epsilon \cdot \nabla u_2^\epsilon -\Delta_\nu u_2^\epsilon + \alpha u_1^\epsilon + \partial_2 p^\epsilon = 0 \text{ in } \Omega \times (0,T)
\end{equation}
\begin{equation} \label{ANSC3}
\epsilon^2 \{ \partial_t u_3^\epsilon + \mathbf{u}^\epsilon \cdot \nabla u_3^\epsilon -\Delta_\nu u_3^{\epsilon} \} - \epsilon \beta u_1^\epsilon + \partial_3 p^\epsilon = 0 \text{ in } \Omega \times (0,T)
\end{equation}
\begin{equation} \label{ANSC4}
\nabla \cdot \mathbf{u}^\epsilon= 0 \text{ in } \Omega \times (0,T)
\end{equation}
\begin{equation} \label{ANSC5}
\partial_t C^\epsilon + \mathbf{u^\epsilon} \cdot \nabla C^\epsilon = \nabla \cdot (\mathbf{M} \nabla C^\epsilon) + S^\epsilon \text{ in } \Omega \times (0,T)
\end{equation} 

The initial condition for the velocity and the pollution concentration are
\begin{equation} \label{ANSC_init}
\mathbf{u}^\epsilon (\cdot, t = 0) = \mathbf{u}_0, \quad C^\epsilon (\cdot, t=0) = C_0 \quad \text{ in } \Omega.
\end{equation}

To make the above set of equations complete, we need to assign the boundary conditions.

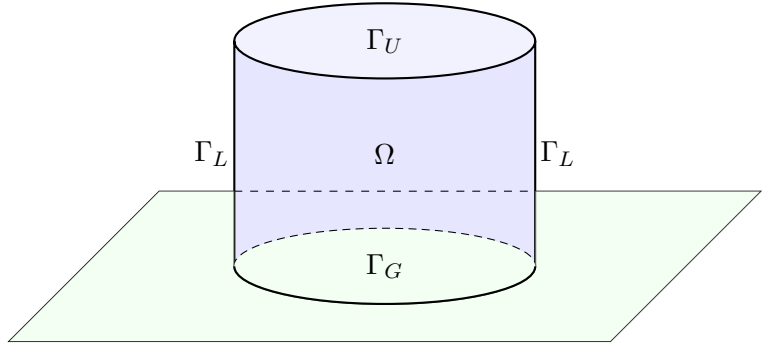
\begin{figure}[!h]
\centering
\begin{tikzpicture}
\fill[color=blue!10] (2,0) -- (2,3) arc (360:180:2cm and 0.5cm) -- (-2,0) arc (180:360:2cm and 0.5cm);
\fill[color=blue!5] (0,3) circle (2cm and 0.5cm);
\draw[thick] (-2,3) -- (-2,0) arc (180:360:2cm and 0.5cm) -- (2,3) ++ (-2,0) circle (2cm and 0.5cm);
\draw[densely dashed, thick] (-2,0) arc (180:0:2cm and 0.5cm);
\draw (-2,1) -- (-3,1) --(-5,-1) -- (3,-1) -- (5,1) -- (2,1);
\draw[dashed] (-2,1) -- (2,1);
\fill[green!10,opacity=0.5] (-2,0) -- (-2,1) -- (-3,1) -- (-5,-1) -- (3,-1) -- (5,1) -- (2,1) -- (2,0) arc (0:180:2cm and -0.5cm);
\fill[color=green!5] (0,0) circle (2cm and 0.5cm);
\draw (0,1.5) node {$\Omega$};
\draw (0,3) node {$\Gamma_U$};
\draw (0,0) node {$\Gamma_G$};
\draw (-2.3,1.5) node {$\Gamma_L$};
\draw (2.3,1.5) node {$\Gamma_L$};
\draw[thick] (2,0) arc (360:180:2cm and 0.5cm);
\end{tikzpicture}
\caption{Boundary structure.}
\label{Fig1}
\end{figure}

Concerning the pollution concentration it is vital to choose boundary conditions that are as physically relevant as possible and yet simple enough. As we can see in many related papers, in the case of LAMs (limited area models) finding the right boundary conditions can be challenging.

In order to describe the boundary conditions precisely we divide $\Gamma = \partial \Omega$ as follows. The upper boundary of the $\Omega$ domain is denoted by $\Gamma_U$, the lateral boundary is $\Gamma_L$, the lower boundary of the domain is $\Gamma_G,$ and for the collective $\Gamma_L \cup \Gamma_U$ section we use the notion $\Gamma_A$ (above the ground), see Figure \ref{Fig1}.

We highlight that in the case of geophysical coordinates we are considering a generic landscape below the air. This means that we equally allow inland and coastal boundary conditions on the lower boundary $\Gamma_G,$ but for simplicity we do require that pollutants do not sink below ground level.

The boundary conditions we choose to use in this case are the following:

\begin{figure}[!h]
  \centering
  \begin{tikzpicture}
  
    \draw[blue] (0,0) -- (2,0) -- (4,0) -- (4,2) -- (4,4) -- (2,4) -- (0,4) -- (0,2) -- (0,0);
    \draw[green, thick] (0,0) -- (4,0);
    
    \draw (2,2)node{$\Omega$};
    \draw (2,0)node[below] {$\nu_3 \partial_3 \mathbf{u}^\epsilon_H = \theta_H,$ $ u_3^\epsilon = 0$ and $\mathbf{M} \nabla C^\epsilon \cdot \vec{\boldsymbol{n}}_{\Gamma_G} = 0$};
    \draw (4,2)node[right] {$\mathbf{u^\epsilon} = 0, C^\epsilon=0 $};
    \draw (2,4)node[above] {$\mathbf{u^\epsilon} = 0, C^\epsilon=0 $};
    \draw (0,2)node[left] {$\mathbf{u^\epsilon} = 0, C^\epsilon=0 $};
    
  \end{tikzpicture}
  \caption{Boundary conditions.}
\label{BC-Fig}
\end{figure}
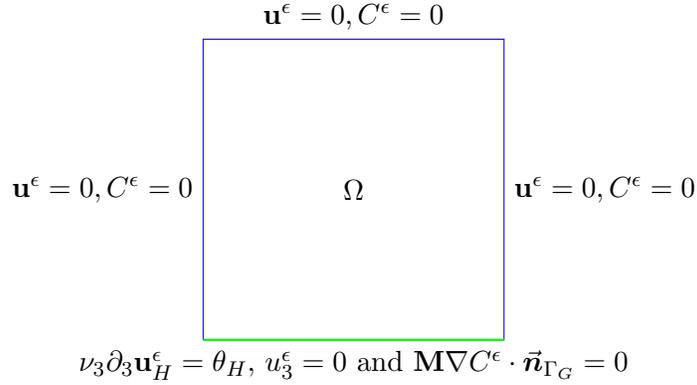

\begin{itemize}
  
  \item The upper boundary $\Gamma_U$ of the domain we are considering is chosen to be at the isobar $z=h$ representing the planetary boundary layer (see \cite{arya2001introduction}), therefore we assume
    
    \begin{equation}
      \mathbf{u}^\epsilon_H = 0, \quad u^\epsilon_3 = 0, \quad C^\epsilon = 0 \quad \text{ on } \Gamma_U \times (0,T).
    \end{equation}
      
  \item On $\Gamma_L$ we use the same boundary conditions as on the upper level, i.e. we have
    
    \begin{equation}
      \mathbf{u}^\epsilon_H = 0, \quad u^\epsilon_3 = 0, \quad C^\epsilon = 0  \quad \text{ on } \Gamma_L \times (0,T).
    \end{equation}
    
    \begin{remark}
    If we wanted to use $C^\epsilon(x,y,z) = \kappa$ with some $\kappa$ constant value, we could make a change of variables subtracting that background concentration and arrive back to the same mathematical problem with $C^\epsilon$ being zero.
    \end{remark}
    
  \item In this model we neglect topological variations or large waves, and the $z=0$ plane represents the lower boundary. On the ground level the boundary conditions are defined as follows. For the vertical air velocity we have the
  
    \begin{equation}
      u^\epsilon_3 = 0 \quad \text{ on } \Gamma_G \times (0,T)
    \end{equation}
condition, which represents the impervious nature of the ground with respect to the wind. For the horizontal velocities we set

    \begin{equation}
      \nu_3 \partial_3 \mathbf{u}^\epsilon_H = \theta_H \quad \text{ on } \Gamma_G \times (0,T),
    \end{equation}
where, roughly speaking, the $\theta_H = (\theta_1, \theta_2)$ parameter represents how "smooth" the braking effect of the terrain is. For example if we consider air flow above a water surface, in this case there is a $\theta_H$ degree of freedom in the slowing process of the wind: as the horizontal air layers are closer and closer to the $\Gamma_G$ lower boundary, $\mathbf{u}_H^\epsilon$ gradually becomes smaller, but it does not necessarily disappear. In other words, above water bodies the braking effect is not immediate: horizontal wind traction generates movement in the water, which leaves space for air flow even on the boundary. On the other hand, above a rocky inland surface this degree of freedom is zero, as the no-slip boundary conditions force the air flow to a full stop on the boundary. Note that it is possible to introduce this parameter at an earlier point in the model development process: if we define the braking function as an $\upsilon_H$ parameter before the scaling, we need to apply the $\upsilon_H = \epsilon \theta_H$ scaling equation (for details see \cite{azerad2001mathematical}). As for the concentration, we assume that the pollution is not absorbed by the ground, so we use the
    
    $$\mathbf{M} \nabla C^\epsilon \cdot \vec{\boldsymbol{n}}_{\Gamma_G} = 0 \quad \text{ on } \Gamma_G$$
    
    boundary condition (see also for example \cite{yeh1975three}), where $\vec{\boldsymbol{n}}_{\Gamma_G}$ is the outward normal vector to $\Gamma_G$. More explicitly, this means $M_{31}\partial_1 C^\epsilon + M_{32} \partial_2 C^\epsilon + M_{33} \partial_3 C^\epsilon = 0$ on $\Gamma_G.$
  
\end{itemize}

Finally, the boundary conditions to describe the anisotropic system (\ref{ANSC1}) - (\ref{ANSC_init}) can be summarised as follows:
\begin{equation} \label{ANSC_bc}
\begin{split}
& \mathbf{u}^\epsilon_H = 0, u^\epsilon_3 = 0, C^\epsilon = 0 \text{ on } \Gamma_A \times (0,T), \\
& \nu_3 \partial_3 \mathbf{u}^\epsilon_H = \theta_H, u^\epsilon_3 = 0 \text{ on } \Gamma_G \times (0,T), \\
& \mathbf{M} \nabla C^\epsilon \cdot \vec{\boldsymbol{n}}_{\Gamma_G} = 0 \text{ on } \Gamma_G \times (0,T).
\end{split} \raisetag{40pt}
\end{equation}
Note that we have a domain where the lower and upper boundaries represent a physically existent layer (i.e. the ground and the planetary boundary layer), but outside the lateral boundary the physical world continues without influence and without any particular external condition. On boundaries of such nature we have to apply OBCs (open boundary conditions), which is not a trivial task, see for example \cite{wang2014ocean}. We need to avoid any spurious reflection or constraint which is unnatural, but at the same time we want the model to remain mathematically manageable.

If we assume $\mathbf{u}^\epsilon = O(1)$, then neglecting the $\epsilon^2$ and $\epsilon$ in the anisotropic equations (\ref{ANSC1}) - (\ref{ANSC_init}), (\ref{ANSC_bc}), we formally arrive to the following hydrostatic Navier-Stokes equations (i.e., primitive equations) combined with pollution:
\begin{equation} \label{HNSC1}
\partial_t u_1 + \mathbf{u} \cdot \nabla u_1 - \Delta_\nu u_1 -\alpha u_2 + \partial_1 p = 0 \text{ in } \Omega \times (0,T)
\end{equation}
\begin{equation} \label{HNSC2}
\partial_t u_2 + \mathbf{u} \cdot \nabla u_2 - \Delta_\nu u_2 +\alpha u_1 + \partial_2 p = 0 \text{ in } \Omega \times (0,T)
\end{equation}
\begin{equation} \label{HNSC3}
\partial_3 p = 0 \text{ in } \Omega \times (0,T)
\end{equation}
\begin{equation} \label{HNSC4}
\nabla \cdot \mathbf{u} = 0 \text{ in } \Omega \times (0,T)
\end{equation}
\begin{equation} \label{HNSC5}
\partial_t C + \mathbf{u} \cdot \nabla C = \nabla \cdot (\mathbf{M} \nabla C) + S \text{ in } \Omega \times (0,T)
\end{equation}
\begin{equation} \label{HNSC_init}
u_i (\cdot, t=0) = u_{0 i}, C (\cdot, t=0) = C_0 \text{ in } \Omega, \text{$i$ = 1,2}
\end{equation}
\begin{equation} \label{HNSC_bc}
\begin{split}
& \mathbf{u}_H = 0, u_3 n_3 = 0, C = 0 \text{ on } \Gamma_A \times (0,T), \\
& \nu_3 \partial_3 \mathbf{u}_H = \theta_H, u_3 = 0 \text{ on } \Gamma_G \times (0,T), \\
& \mathbf{M} \nabla C \cdot \vec{\boldsymbol{n}}_{\Gamma_G} = 0 \text{ on } \Gamma_G \times (0,T).
\end{split}
\end{equation}
\begin{remark} \label{sourceregularityremark}
Here $S$ stands for the $\delta$ limit distribution. We keep the notation $S$ for generality, since as we will see later, instead of the approximated delta functions and the delta distribution, it is possible to use any sort of "sufficiently smooth" source term (which can be independent of $\epsilon$ in the anisotropic case as well) that is bounded in $L^2_t L^2_x.$
\end{remark}

\section{Weak formulation and Main Result} \label{weakformulationmainresult}

In this section we describe the weak formulation of the merged equations in both the anisotropic and hydrostatic case we are going to use in the paper, and we state our main result.

We need to define the following spaces:
\[ C^{\infty}_{\Gamma_A} (\Omega) = \{ \varphi \in C^\infty (\bar{\Omega}); \varphi = 0 \text{ on some neighbourhood of } \Gamma_A \} \]
\[ H^k_{\Gamma_A} (\Omega) = \overline{C_{\Gamma_A} ^ \infty (\Omega)} ^ {H^k (\Omega)} = \{ v \in H^k(\Omega); \partial^\alpha v=0 \text{ on } \Gamma_A \text{ for any } \abs{\alpha} < k \} \]
\[ \mathbf{V} = \{ \mathbf{v} \in H^1_{\Gamma_A} (\Omega) \times H^1_{\Gamma_A} (\Omega) \times H^1_0 (\Omega); \nabla \cdot \mathbf{v} = 0 \text{ in } \Omega \} \]
\[ H (\partial_3, \Omega) = \{ v \in L^2(\Omega); \partial_3 v \in L^2 (\Omega)\} \]
\[ H_0 (\partial_3, \Omega) = \overline{C_0 ^\infty (\Omega)}^{H (\partial_3, \Omega)} = \{ v \in H(\partial_3, \Omega); v n_3 = 0 \text{ on } \Gamma \} \]
\[ \mathbf{W} = \{ \mathbf{v} \in H^1_{\Gamma_A} (\Omega) \times H^1_{\Gamma_A} (\Omega) \times H_0 (\partial_3, \Omega); \nabla \cdot \mathbf{u} = 0 \text{ in } \Omega \} \]

Furthermore we introduce the notation $b(\mathbf{u}_H)$ = $\alpha (-u_2 , u_1).$

Then the weak formulation of the hydrostatic system (\ref{HNSC1}) - (\ref{HNSC_bc}) takes the following form.

\begin{definition} \label{definitionhydrostaticWF}

The pair $(\mathbf{u}, C)$ is called a weak solution of the a hydrostatic system (\ref{HNSC1}) - (\ref{HNSC5}) subject to (\ref{HNSC_init}) - (\ref{HNSC_bc}) if $\mathbf{u} = (\mathbf{u}_H, u_3) \in L^2 (0, T; \mathbf{W})$ with $\mathbf{u}_H \in L^\infty (0,T; L^2(\Omega)^2),$ and $C \in L^\infty(0,T; L^2(\Omega)) \cap L^2(0,T; H^1_{\Gamma_A}(\Omega))$, moreover $\mathbf{u}$ and $C$ satisfy the integral identities
\begin{equation}
\begin{split}
& \int_0^T \bigg[ -(\mathbf{u}_H, \partial_t \mathbf{\tilde{u}}_H) + (\nabla_\nu \mathbf{u}_H, \nabla_\nu \mathbf{\tilde{u}}_H) - (\mathbf{u}_H, (\mathbf{u} \cdot \nabla) \mathbf{\tilde{u}}_H) + (b(\mathbf{u}_H), \mathbf{\tilde{u}}_H) \bigg ] \diff t \\
& = ({\mathbf{u}_H}_0, \mathbf{\tilde{u}}_H(0)) - \int_0^T \langle \theta_H, \mathbf{\tilde{u}}_H \rangle_{\Gamma_G} \diff t \raisetag{15pt}\label{hydrostaticWF_U}
\end{split}
\end{equation}

and

\begin{equation}
\begin{split}
& \int_0^T \bigg[ -(C, \partial_t \tilde{C}) -(\mathbf{u}C, \nabla \tilde{C}) +  (\mathbf{M} \nabla C, \nabla \tilde{C}) \bigg ] \diff t \\
& = (C_0, \tilde{C}(0)) + \int_0^T (S,\tilde{C}) \diff t \raisetag{20pt} \label{hydrostaticWF_C}
\end{split}
\end{equation}
for all $(\mathbf{\tilde{u}}, \tilde{C})$ with $\mathbf{\tilde{u}} = (\mathbf{\tilde{u}}_H, \tilde{u}_3) \in H^1 (0,T, \mathbf{W})$, $\mathbf{\tilde{u}}_H (T) = 0$ and $\partial_3 \mathbf{\tilde{u}}_H \in L^\infty (0, T; L^3 (\Omega)^2)$ and $\tilde{C} \in L^2 (0,T, H^2_{\Gamma_A} (\Omega)) \cap H^1 (0,T, L^2)$, $\tilde{C} (T) = 0$. 
\end{definition}

Note that the term $\int_0^T (S,\tilde{C}) \diff t$ makes sense since in this case $S$ denotes the delta distribution and we have 
\[ (S,\tilde{C}) = \int_\Omega \delta(\mathbf{x} - \mathbf{x}_s) \tilde{C}(\mathbf{x}) \diff \mathbf{x} = \tilde{C}(\mathbf{x}_s).\]
The existence of weak solution for the hydrostatic system (\ref{HNSC1}) - (\ref{HNSC_bc}) can be proved in the spirit of \cite{temam2005some}. In appendix A we will outline the main steps of the proof and the differences compared to the ocean model.

The weak form of the anisotropic system (\ref{ANSC1}) - (\ref{ANSC_init}), (\ref{ANSC_bc}) is as follows.

\begin{definition} \label{definitionanisotropicWF}

The pair $(\mathbf{u}^\epsilon, C^\epsilon)$ is called a weak solution of the anisotropic system (\ref{ANSC1}) - (\ref{ANSC5}) subject to (\ref{ANSC_init}), (\ref{ANSC_bc}) if $\mathbf{u}^\epsilon = (\mathbf{u}^\epsilon_H, u^\epsilon_3) \in L^2 (0,T ; \mathbf{V}) \cap L^\infty (0, T; L^2 (\Omega)^3)$ and $C^\epsilon \in L^\infty (0,T; L^2(\Omega)) \cap L^2 (0,T; H^1_{\Gamma_A}(\Omega))$, moreover $\mathbf{u}^\epsilon$ and $C^\epsilon$ satisfy the integral identities
\begin{equation}
\begin{split}
& \int_0^T \bigg [ -(\mathbf{u}^\epsilon_H, \partial_t \mathbf{\tilde{u}}_H) + (\nabla_\nu \mathbf{u}^\epsilon_H, \nabla_\nu \mathbf{\tilde{u}}_H) - (\mathbf{u}^\epsilon_H, (\mathbf{u}^\epsilon \cdot \nabla) \mathbf{\tilde{u}}_H) + (b(\mathbf{u}^\epsilon_H), \mathbf{\tilde{u}}_H) \bigg ] \diff t \\
& + \epsilon \int_0 ^ T \big [ (\beta u_3 ^\epsilon, \tilde{u}_1) - (\beta u_1^\epsilon, \tilde{u}_3) \big ] \diff t\\
& + \epsilon^2 \int _0 ^ T \big [ -(u_3^\epsilon, \partial_t \tilde{u}_3) +(\mathbf{u}^\epsilon \cdot \nabla u_3 ^ \epsilon, \tilde{u}_3) + (\nabla_\nu u_3^\epsilon, \nabla_\nu \tilde{u}_3) \big ] \diff t \\
& = ({\mathbf{u}_H}_0, \mathbf{\tilde{u}}_H(0)) + \epsilon^2 (u_{03}^\epsilon, \tilde{u}_3 (0)) - \int_0^T \langle \theta_H, \mathbf{\tilde{u}}_H \rangle_{\Gamma_G} \diff t \raisetag{20pt} \label{anisotropicWF_U}
\end{split}
\end{equation}
and
\begin{equation}
\begin{split}
& \int_0^T \bigg [ -(C^\epsilon, \partial_t \tilde{C}) -(\mathbf{u}^\epsilon C^\epsilon, \nabla \tilde{C}) + (\mathbf{M} \nabla C^\epsilon, \nabla \tilde{C}) \bigg ] \diff t \\
& =  (C_0, \tilde{C} (0)) + \int_0^T (S^\epsilon,\tilde{C}) \diff t \raisetag{20pt} \label{anisotropicWF_C}
\end{split}
\end{equation}
for all $\mathbf{\tilde{u}} = (\mathbf{\tilde{u}}_H, \tilde{u}_3) \in H^1 (0,T;\mathbf{V})$ with $\mathbf{\tilde{u}}(T) = 0$ and $\tilde{C}$ such that $\tilde{C} \in H^1 (0,T, H^2_{\Gamma_A} (\Omega))$ and $\tilde{C} (T) = 0$.
\end{definition}

Note that since in our case the source term $S^\epsilon$ is given by (\ref{approxsourceterm}), the last term in (\ref{anisotropicWF_C}) is well defined. If we use a source function of a different and more general form, it suffices to be bounded in $L_t^2 L_x^2$ for the weak formulation to make sense.

We will omit the proof of the existence of weak solutions (see Definition \ref{definitionanisotropicWF}) for the anisotropic system (\ref{ANSC1}) - (\ref{ANSC_init}), (\ref{ANSC_bc}) since it follows straightforwardly by combining the standard finite-dimensional Galerkin approximation used for the existence of weak solutions of the Navier Stokes equations (see \cite{temam2001navier}) with the one used for the weak solutions for the parabolic equations. Now we are ready to state our main result.

\begin{theorem} \label{maintheorem}
Let $\mathbf{u}_0$ $\in$ $L^2 (\Omega)^3$, with $\nabla \cdot \mathbf{u}_0 = 0, \mathbf{u}_0 \cdot \mathbf{n} = 0$ on $\partial \Omega$; $\theta_H \in L^2(0,T; H^{-1/2}(\Gamma_G)),$ let $C_0 \in$ $L^2 (\Omega)$, $C_0 = 0$ on $ \partial \Omega$, and assume that the boundary conditions (\ref{ANSC_bc}) hold; then as the aspect ratio $\epsilon$ tends to zero, any weak solution $(\mathbf{u}^\epsilon, C^\epsilon)$ of the anisotropic equations (\ref{ANSC1}) - (\ref{ANSC5}) converge to a weak solution $(\mathbf{u}, C)$ of the hydrostatic equations of the polluted atmosphere (\ref{HNSC1}) - (\ref{HNSC5}).
\end{theorem}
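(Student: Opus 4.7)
The approach is the classical one for such singular limits: establish uniform in $\epsilon$ energy estimates, extract weakly convergent subsequences, upgrade to strong convergence of the horizontal velocity and the concentration via an Aubin–Lions compactness argument, and then pass to the limit in every term of the weak formulations \eqref{anisotropicWF_U}–\eqref{anisotropicWF_C}.

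First I would test \eqref{anisotropicWF_U} with $\mathbf{\tilde u} = \mathbf{u}^\epsilon$ and \eqref{anisotropicWF_C} with $\tilde C = C^\epsilon$ (rigorously through a Galerkin approximation, so that such a choice is legitimate). Skew-symmetry annihilates the transport and Coriolis contributions, while the boundary duality $\langle \theta_H, \mathbf{u}_H^\epsilon\rangle_{\Gamma_G}$ is absorbed by a trace inequality and Young's inequality. One obtains the $\epsilon$-independent bounds
\begin{equation*}
\|\mathbf{u}_H^\epsilon\|_{L^\infty_t L^2_x} + \epsilon \|u_3^\epsilon\|_{L^\infty_t L^2_x} + \|\nabla_\nu \mathbf{u}_H^\epsilon\|_{L^2_t L^2_x} + \epsilon \|\nabla_\nu u_3^\epsilon\|_{L^2_t L^2_x} \leq M,
\end{equation*}
and, thanks to the coercivity \eqref{coercivity} of $\mathbf{M}$,
\begin{equation*}
\|C^\epsilon\|_{L^\infty_t L^2_x} + \|\nabla C^\epsilon\|_{L^2_t L^2_x} \leq M'.
\end{equation*}
For the latter I control $\int_0^T (S^\epsilon, C^\epsilon)\diff t$ by pairing the Gaussian $\delta_\epsilon$ against the $L^2_t H^1_x$ function $C^\epsilon$, using that $\delta_\epsilon\to\delta_{\mathbf{x}_s}$ is uniformly bounded in a negative Sobolev norm (or invoking Remark \ref{sourceregularityremark} directly). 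The vertical velocity is handled by reconstructing it from incompressibility and the boundary condition $u_3^\epsilon|_{\Gamma_G}=0$,
\begin{equation*}
u_3^\epsilon(x_1,x_2,x_3) = -\int_0^{x_3}\bigl(\partial_1 u_1^\epsilon + \partial_2 u_2^\epsilon\bigr)\diff z,
\end{equation*}
which transfers the $L^2_t L^2_x$ bound on $\nabla_H \mathbf{u}_H^\epsilon$ to an $\epsilon$-independent $L^2_t L^2_x$ bound on $u_3^\epsilon$ itself, crucial for identifying nonlinear limits.

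From these estimates I extract along a subsequence $\mathbf{u}_H^\epsilon \rightharpoonup \mathbf{u}_H$ weakly in $L^2_t H^1_{\Gamma_A}$ and weakly-$\ast$ in $L^\infty_t L^2$, $u_3^\epsilon \rightharpoonup u_3$ weakly in $L^2_t L^2$, and $C^\epsilon \rightharpoonup C$ weakly in $L^2_t H^1_{\Gamma_A}$ and weakly-$\ast$ in $L^\infty_t L^2$. Linear terms pass to the limit immediately; the $\epsilon$-Coriolis coupling and the whole $\epsilon^2$-block in \eqref{anisotropicWF_U} vanish because each is a bounded quantity times a factor tending to zero. This is precisely how the hydrostatic constraint $\partial_3 p = 0$ is encoded: the admissible test space shrinks from $\mathbf{V}$ to $\mathbf{W}$, and the $\epsilon$-terms that tested against $\tilde u_3$ no longer survive. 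The source term passes to the limit since $S^\epsilon \to IH(t-t_s)\delta_{\mathbf{x}_s}$ in the sense of distributions, paired with the smooth test function $\tilde C$.

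The main obstacle is the nonlinear terms $(\mathbf{u}_H^\epsilon,(\mathbf{u}^\epsilon\cdot\nabla)\mathbf{\tilde u}_H)$ and $(\mathbf{u}^\epsilon C^\epsilon, \nabla \tilde C)$, where weak convergence is insufficient. To upgrade it, I would derive $\epsilon$-uniform bounds on $\partial_t \mathbf{u}_H^\epsilon$ and $\partial_t C^\epsilon$ in suitable negative-order spaces (for instance $L^{4/3}_t(H^1_{\Gamma_A})'$ for the concentration and its projected, divergence-free analogue for the horizontal velocity, in order to avoid touching the singular $\epsilon^2\partial_t u_3^\epsilon$ term). Together with the $L^2_t H^1$ spatial control and the compact embedding $H^1 \hookrightarrow\hookrightarrow L^2$, the Aubin–Lions lemma yields strong convergence $\mathbf{u}_H^\epsilon \to \mathbf{u}_H$ and $C^\epsilon \to C$ in $L^2_t L^2_x$; strong convergence of $u_3^\epsilon$ then follows from the integral reconstruction applied to the (now strongly converging) horizontal divergence. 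These strong convergences identify the limits of the products $u_i^\epsilon u_j^\epsilon$ and $\mathbf{u}^\epsilon C^\epsilon$ as $u_i u_j$ and $\mathbf{u}C$, completing the proof that $(\mathbf{u},C)$ is a weak solution of \eqref{HNSC1}–\eqref{HNSC_bc} in the sense of Definition \ref{definitionhydrostaticWF}.
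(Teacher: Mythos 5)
Your overall architecture (energy estimates, weak limits, compactness for the nonlinear terms) matches the paper's, and your treatment of the concentration is essentially equivalent to what is done there: bounding $\partial_t C^\epsilon$ in $L^{4/3}_t({H^2})^*$ from the equation and invoking Aubin--Lions with $H^1\hookrightarrow\hookrightarrow L^2\hookrightarrow ({H^2})^*$ is the same information as the paper's translation estimate $\norm{\tau_h C^\epsilon - C^\epsilon}_{L^2(0,T-h;({H^2})^*)}\le c\,h^{1/4}$, since for $C^\epsilon$ there is no $\epsilon$-dependent obstruction.

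The gap is in the compactness of $\mathbf{u}_H^\epsilon$. A classical Aubin--Lions argument needs an $\epsilon$-uniform bound on $\partial_t\mathbf{u}_H^\epsilon$ in a \emph{fixed} negative-order space, and that is precisely what is not available here: the admissible test functions are three-dimensional divergence-free fields, so any pairing that recovers $\partial_t\mathbf{u}_H^\epsilon$ against a general $\mathbf{\tilde u}_H$ forces a nonzero $\tilde u_3$ (determined by $\nabla_H\cdot\mathbf{\tilde u}_H$) and therefore drags in the singular term $\epsilon^2\partial_t u_3^\epsilon$. Restricting to test fields with $\tilde u_3=0$, i.e.\ horizontally divergence-free $\mathbf{\tilde u}_H$, leaves too small a test space to control $\partial_t\mathbf{u}_H^\epsilon$ in a norm usable for Aubin--Lions. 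Your phrase about a ``projected, divergence-free analogue \ldots to avoid touching the singular $\epsilon^2\partial_t u_3^\epsilon$ term'' names the difficulty but does not resolve it. What one actually gets by integrating the momentum identity over $(t,t+h)$ is a time-translation estimate of the form $\norm{\tau_h\mathbf{u}_H^\epsilon-\mathbf{u}_H^\epsilon}_{L^p(0,T-h;\mathbf{Y})}\le\varphi(h)+\psi(\epsilon)$, where the $\psi(\epsilon)=O(\epsilon)$ remainder comes exactly from $\epsilon^2\bigl(u_3^\epsilon(t+h)-u_3^\epsilon(t),\tilde u_3\bigr)$. Standard Aubin--Lions does not tolerate such a remainder; this is why the paper (following Az\'erad--Guill\'en) uses the modified Simon-type compactness criterion whose second hypothesis is precisely $\varphi(h)+\psi(\epsilon)$ with both limits vanishing. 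To close your argument you must either import that criterion or supply a genuinely $\epsilon$-uniform time-derivative bound, which the equation does not provide. A secondary, smaller point: pairing $\delta_\epsilon$ against $C^\epsilon\in L^2_tH^1_x$ via a ``uniformly bounded negative Sobolev norm'' is delicate in three dimensions, where $\delta\notin H^{-1}$; the uniform control of $\int_0^t(S^\epsilon,C^\epsilon)$ deserves a more careful justification than a one-line appeal to duality.
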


\section{Proof of the main theorem} \label{proof}
 
This section is devoted to the proof of Theorem \ref{maintheorem}. In the extended case with the pollution equation the proof relies on a priori estimates gained from the energy inequality and, as we will see, this is sufficient to pass to the limit in the linear terms as $\epsilon$ vanishes. Concerning the nonlinear terms we will prove a uniform in $\epsilon$ space--time estimate for $\mathbf{u}^\epsilon$ and $C^\epsilon$ that will allow us to apply a compactness criterion proved in \cite{azerad2001mathematical}.

\subsection{Energy inequality}

As we mentioned before, one of the most fundamental tools in this proof is the energy inequality. If we derive it for the anisotropic system (\ref{ANSC1}) - (\ref{ANSC_init}), (\ref{ANSC_bc}), we obtain that for a.e. $t \in [0,T]$ the following holds:
\begin{equation}
\begin{split}  
& \frac{1}{2} (\norm{\mathbf{u}_H^\epsilon (t)}_{L^2}^2 + \epsilon ^ 2 \norm{u_3^\epsilon (t)}_{L^2}^2 + \norm{C^\epsilon (t)}_{L^2}^2) \\
& + \int_0 ^ t \big[ \norm{\nabla_\nu \mathbf{u}_H^\epsilon}_{L^2}^2 + \epsilon ^ 2 \norm{\nabla_\nu u_3^\epsilon}_{L^2}^2  + (\mathbf{M} \nabla C^\epsilon, \nabla C^\epsilon) \big] \diff \tau \\
& \leq \frac{1}{2} (\norm{\mathbf{u}_H^\epsilon (0)}_{L^2}^2 + \epsilon ^ 2 \norm{u_3^\epsilon (0)}_{L^2}^2 + \norm{C_0}_{L^2}^2) \\
& + \int_0^t \abs{\langle \theta_H, \mathbf{u}^\epsilon_H \rangle_{\Gamma_G}} \diff \tau + \int_0^t (S^\epsilon,C^\epsilon) \diff \tau. \raisetag{60pt} \label{energyIEQ}
\end{split}
\end{equation}

The right-hand side is bounded because of the hypothesis on the initial data, using that the $S^\epsilon$ source term is in $L^\infty_x (\Omega)$ and $\theta_H \in L^2(0,T; H^{-1/2}(\Gamma_G))$.

After applying the coercivity property (\ref{coercivity}) of the diffusion matrix, (\ref{energyIEQ}) now takes the form of the following energy inequality,
\begin{equation}
\begin{split}  
& \frac{1}{2} (\norm{\mathbf{u}_H^\epsilon (t)}_{L^2}^2 + \epsilon ^ 2 \norm{u_3^\epsilon (t)}_{L^2}^2 + \norm{C^\epsilon (t)}_{L^2}^2) \\
& + \int_0 ^ t \big [\norm{\nabla_\nu \mathbf{u}_H^\epsilon}_{L^2}^2 + \epsilon ^ 2 \norm{\nabla_\nu u_3^\epsilon}_{L^2}^2 + \lambda \norm{\nabla C^\epsilon}_{L^2}^2 \big ] \diff \tau \\
& \leq \frac{1}{2} (\norm{\mathbf{u}_H^\epsilon (0)}_{L^2}^2 + \epsilon ^ 2 \norm{u_3^\epsilon (0)}_{L^2}^2 + \norm{C_0}_{L^2}^2) \\
& + \int_0^t \abs{\langle \theta_H, \mathbf{u}^\epsilon_H \rangle_{\Gamma_G}} \diff \tau + \int_0^t (S^\epsilon,C^\epsilon) \diff \tau. \raisetag{60pt} \label{finalformEIEQ}
\end{split}
\end{equation}

\subsection{A priori estimates and weak convergence}

From the final form of the energy inequality we obtain uniform a priori estimates that we summarise in the following proposition.

\begin{prop}
Let $u^\epsilon_1, u^\epsilon_2, u^\epsilon_3$ and $C^\epsilon$ be the weak solutions of the system (\ref{ANSC1}) - (\ref{ANSC5}) in the sense of Definition \ref{definitionanisotropicWF}. Then it holds,

\begin{align}
& u^\epsilon_1, u^\epsilon_2, \epsilon u^\epsilon_3, C^\epsilon & \quad & \text{are bounded in $L^\infty (0,T; L^2(\Omega)),$} \label{proposition1} \\
& u^\epsilon_3 & \quad & \text{is bounded in $L^2 (0,T; L^2(\Omega)),$} \label{propositionExtra} \\
& u^\epsilon_1, u^\epsilon_2, \epsilon u^\epsilon_3 & \quad & \text{are bounded in $L^2(0,T; H^1(\Omega)),$} \label{proposition2} \\
& C^\epsilon & \quad & \text{is bounded in $L^2(0,T;H^1(\Omega)).$} \label{proposition3}
\end{align}

\end{prop}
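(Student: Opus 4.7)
The plan is to extract all four bounds directly from the energy inequality (\ref{finalformEIEQ}) after absorbing the boundary and source contributions on its right-hand side, and then to add one short divergence-free argument for (\ref{propositionExtra}).

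First, I would bound the right-hand side of (\ref{finalformEIEQ}) uniformly in $\epsilon$. The initial-data pieces are controlled by the hypotheses $\mathbf{u}_0 \in L^2(\Omega)^3$ and $C_0 \in L^2(\Omega)$. For the boundary term I apply the duality pairing together with the trace inequality $\|\mathbf{u}^\epsilon_H\|_{H^{1/2}(\Gamma_G)} \le c \|\mathbf{u}^\epsilon_H\|_{H^1(\Omega)}$ and Young's inequality with a small parameter $\eta > 0$, obtaining $\int_0^t |\langle \theta_H, \mathbf{u}^\epsilon_H\rangle_{\Gamma_G}| \diff \tau \le \eta \int_0^t \|\mathbf{u}^\epsilon_H\|_{H^1}^2 \diff \tau + C_\eta \|\theta_H\|_{L^2_t H^{-1/2}}^2$. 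The first summand is absorbed into the $\|\nabla_\nu \mathbf{u}^\epsilon_H\|_{L^2}^2$ term on the left via Poincar\'e (using that $\mathbf{u}^\epsilon_H$ vanishes on $\Gamma_A$) together with the equivalence of the anisotropic seminorm with the isotropic $H^1$ seminorm, since $\nu_1, \nu_2, \nu_3 > 0$ are fixed. Similarly for the source term, Cauchy--Schwarz, Young's inequality and Poincar\'e applied to $C^\epsilon$ (which vanishes on $\Gamma_A$) yield $\int_0^t (S^\epsilon, C^\epsilon) \diff \tau \le \eta \int_0^t \|\nabla C^\epsilon\|_{L^2}^2 \diff \tau + C_\eta \|S^\epsilon\|_{L^2_t L^2_x}^2$, and the gradient piece is absorbed into $\lambda \|\nabla C^\epsilon\|_{L^2}^2$ through the coercivity (\ref{coercivity}); $\|S^\epsilon\|_{L^2_t L^2_x}$ is uniformly bounded in $\epsilon$ by Remark \ref{sourceregularityremark}.

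After absorption, (\ref{finalformEIEQ}) yields a constant independent of $\epsilon$ and $t \in [0,T]$ that controls both $\|\mathbf{u}^\epsilon_H(t)\|_{L^2}^2 + \epsilon^2 \|u^\epsilon_3(t)\|_{L^2}^2 + \|C^\epsilon(t)\|_{L^2}^2$ and $\int_0^T [\|\nabla \mathbf{u}^\epsilon_H\|_{L^2}^2 + \epsilon^2 \|\nabla u^\epsilon_3\|_{L^2}^2 + \|\nabla C^\epsilon\|_{L^2}^2] \diff t$. The first quantity gives (\ref{proposition1}) at once, while combining the two (with one more application of Poincar\'e for the horizontal components and for $C^\epsilon$) yields the $L^2_t H^1_x$ bounds (\ref{proposition2}) and (\ref{proposition3}).

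The remaining estimate (\ref{propositionExtra}) is not visible from the energy inequality, which only controls $\epsilon u^\epsilon_3$ in $L^2_t H^1_x$; to recover a uniform $L^2_t L^2_x$ bound on $u^\epsilon_3$ itself I exploit the incompressibility (\ref{ANSC4}) together with the ground condition $u^\epsilon_3 = 0$ on $\Gamma_G$. Writing $\partial_3 u^\epsilon_3 = -\partial_1 u^\epsilon_1 - \partial_2 u^\epsilon_2$ and integrating vertically from the floor gives $u^\epsilon_3(x_1, x_2, x_3) = -\int_0^{x_3}(\partial_1 u^\epsilon_1 + \partial_2 u^\epsilon_2)(x_1, x_2, s)\diff s$, so Cauchy--Schwarz and the boundedness of $h$ on $\omega$ yield $\|u^\epsilon_3(t)\|_{L^2(\Omega)}^2 \le C \|\nabla \mathbf{u}^\epsilon_H(t)\|_{L^2(\Omega)}^2$; integrating in time and invoking (\ref{proposition2}) closes the argument. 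The main technical care lies in the absorption step, where one has to use Poincar\'e (via the $\Gamma_A$ conditions), the coercivity of $\mathbf{M}$ and the equivalence of anisotropic and standard $H^1$ seminorms simultaneously, so that every right-hand side piece ends up strictly inside the coercive left-hand side with room to spare.
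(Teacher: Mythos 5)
Your proof is correct and follows essentially the same route as the paper: all four bounds are extracted from the energy inequality (\ref{finalformEIEQ}), with (\ref{propositionExtra}) obtained from the divergence-free condition plus a vertical Poincar\'e argument (your explicit integration of $\partial_3 u_3^\epsilon = -\partial_1 u_1^\epsilon - \partial_2 u_2^\epsilon$ up from the ground is exactly the paper's ``Poincar\'e inequality in the vertical direction''), the only cosmetic difference being that you close the estimate by absorbing the boundary and source terms via trace/Young's inequalities where the paper simply invokes Gr\"onwall. One caveat, inherited from the paper rather than introduced by you: the assertion that $\| S^\epsilon \|_{L^2_t L^2_x}$ is bounded uniformly in $\epsilon$ fails for the Gaussian approximate delta (\ref{sourcedefinition}), whose $L^2_x$ norm scales like $\epsilon^{-3/2}$, so that step is rigorous only for the ``general source bounded in $L^2_t L^2_x$'' variant described in Remark \ref{sourceregularityremark}.
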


\begin{proof}
By applying the classical Gr\"onwall inequality, (\ref{proposition1}), (\ref{proposition2}) and (\ref{proposition3}) follow directly by the energy inequality (\ref{finalformEIEQ}). On the other hand, (\ref{propositionExtra}) can be verified by using the divergence free condition and that as a consequence, we have $\partial_3 u_3 = - \partial_1 u_1 - \partial_2 u_2$ bounded in $L^2_t L^2 _x$. The bound holds for $u_3$ itself as well, owing to the Poincar\'e inequality in the vertical direction, and using that $u_3$ vanishes on the boundary.
\end{proof}

As a consequence, up to subsequences still denoted by the same way, we have the following weak convergence results.

\begin{align} 
& \mathbf{u}_H^\epsilon \rightharpoonup \mathbf{u}_H &\quad & \text{weakly in $L_t^\infty L_x^2 \cap L_t^2 H_x^1$,} \label{WeakConvUHEps} \\
& \epsilon^2 u_3^\epsilon \to 0 &\quad & \text{strongly in $L_t^\infty L_x^2 \cap L_t^2 H_x^1$,} \label{WeakConvU3Eps} \\ 
& u_3^\epsilon \rightharpoonup u_3 &\quad & \text{weakly in $L_t^2 L_x^2$,} \label{WeakConvU3NoEps} \\
& C^\epsilon \rightharpoonup C &\quad & \text{weakly in $L_t^\infty L_x^2 \cap L_t^2 H_x^1$} \label{WeakConvCEps1}
\end{align} 

\subsection{Passing to the limit}

In order to conclude the proof of Theorem \ref{maintheorem} we pass to the limit in the weak formulation (\ref{anisotropicWF_U})-(\ref{anisotropicWF_C}). To take the limit for the linear velocity terms that do not include the concentration function $C^\epsilon$ we can use (\ref{WeakConvUHEps}), (\ref{WeakConvU3Eps}) and (\ref{WeakConvU3NoEps}), and the bounds (\ref{propositionExtra}), (\ref{proposition2}). As for the nonlinear terms, the convergence can be shown using a time compactness theorem that is based on a result by Simon \cite{simon1986compact}. The key point is to apply a sort of generalisation of the classical translation criterium of Riesz-Frechet-Kolmogorov which enables us to get strong convergence for the horizontal velocities. This is achieved by establishing a bound for the perturbation of $\mathbf{u}_H$ of the form $\norm{\tau_h \mathbf{u}_H - \mathbf{u}_H}_{L^p(0,T-h, \mathbf{Y})} \leq \varphi(h) + \psi(\epsilon),$ where $\mathbf{Y}$ is a Banach space, $h$ is non-negative value, $\varphi$ and $\psi$ are appropriate functions with their limits vanishing at zero, and $\tau_h \mathbf{u}_H = \mathbf{u}_H (t+h) $. After obtaining strong convergence for $\mathbf{u}_H,$ the proof of the convergence for these nonlinear terms can be closed by applying basic interpolation techniques and the generalised Holder inequality. The details are omitted since they are analogous to those described in \cite{azerad2001mathematical}.

For the linear terms including the concentration $C^\epsilon$ we can directly apply the weak convergence result of (\ref{WeakConvCEps1}).

We get
\[ \int_0 ^ T (C^\epsilon , \partial_t \tilde{C}) d \mathrm{t} \to \int_0^T (C, \partial_t \tilde{C}) \diff t,\]
\[ \int_0 ^ T (\mathbf{M} \nabla C^\epsilon, \nabla \tilde{C}) d \mathrm{t} \to \int_0^T (\mathbf{M} \nabla C, \nabla \tilde{C}) \diff t.\]
For the source term, by applying standard results for distribution functions, we get $(S^\epsilon, \tilde{C}) \to (S, \tilde{C}).$ 

Finally we have to deal with the nonlinear term $(\mathbf{u}^\epsilon C^\epsilon, \nabla \tilde{C}).$

By analysing this nonlinear term in a more precise way we see that $u^\epsilon_i C^\epsilon$ for $i=1,2$ is weakly convergent since as we mentioned before, we have the strong convergence of the horizontal velocities, while in the case of $i=3$ this property does not hold. We overcome this difficulty by proving a compactness property for the pollution concentration $C^\epsilon.$ The key tool will be the following compactness criterion (for the proof see Theorem 5.1 in \cite{azerad2001mathematical}).

\begin{theorem}
Let $T>0,$ and let the Banach spaces $\mathbf{X} \hookrightarrow \mathbf{B} \hookrightarrow \mathbf{Y},$ where $\mathbf{X}$ is compactly embedded in $\mathbf{B},$ and $\mathbf{B}$ is continuously embedded in $\mathbf{Y}.$ Let $(f_\epsilon)_{\epsilon>0}$ be a family of functions of $L^p (0,T; \mathbf{X}), 1 \leq p \leq \infty,$ with the extra condition $(f_\epsilon)_{\epsilon>0} \subset C (0,T; \mathbf{Y})$ if $p = \infty,$ such that
\begin{enumerate}
\item $(f_\epsilon)_{\epsilon>0}$ is bounded in $L^p (0,T; \mathbf{X}),$
\item $\norm{\tau_h f_\epsilon - f_\epsilon}_{L^p(0,T-h; \mathbf{Y})} \leq \varphi(h) + \psi(\epsilon)$ for a given pair of functions $\varphi, \psi$ with
$$
\begin{cases}
\lim_{h \to 0} \varphi(h) = 0,\\
\lim_{\epsilon \to 0} \psi(\epsilon) = 0,
\end{cases}
$$
\end{enumerate}

where $\tau_h f_\epsilon$ denotes $f_\epsilon (t+h), h > 0.$

Then the family $(f_\epsilon)_{\epsilon>0}$ possesses a cluster point in $L^p (0,T; \mathbf{B})$ and also in $\mathcal{C}(0,T; \mathbf{B})$ if $p = \infty$ as $\epsilon \to 0.$
\end{theorem}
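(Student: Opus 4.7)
The plan is to reduce the statement to the classical compactness theorem of J.~Simon (1986), which yields relative compactness in $L^p(0,T;\mathbf{B})$ from (i) boundedness in $L^p(0,T;\mathbf{X})$ together with $\mathbf{X}$ compactly embedded in $\mathbf{B}$ and $\mathbf{B} \hookrightarrow \mathbf{Y}$ continuously, and (ii$'$) a \emph{uniform} equi-continuity of time-translations in $L^p(0,T-h;\mathbf{Y})$. Our hypothesis 2 is weaker than (ii$'$): it gives a two-parameter bound $\varphi(h) + \psi(\epsilon)$ which is not uniform in $\epsilon$. Since the conclusion only asks for a cluster point, however, it is enough to pick any sequence $\epsilon_n \to 0$ and verify (ii$'$) for the countable subfamily $\{f_{\epsilon_n}\}_{n \ge 1}$.

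First I would fix an arbitrary sequence $\epsilon_n \to 0$ and focus on the countable family $\mathcal{F} = \{f_{\epsilon_n}\}_{n \ge 1}$. Boundedness of $\mathcal{F}$ in $L^p(0,T;\mathbf{X})$ is inherited directly from hypothesis~1. The key step is to upgrade hypothesis~2 to a uniform-in-$n$ statement. Given $\eta > 0$, choose $N$ so large that $\psi(\epsilon_n) \le \eta/2$ for all $n \ge N$; then by hypothesis~2, $\norm{\tau_h f_{\epsilon_n} - f_{\epsilon_n}}_{L^p(0,T-h;\mathbf{Y})} \le \varphi(h) + \eta/2$ holds uniformly for $n \ge N$. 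For the finitely many indices $n < N$, each individual $f_{\epsilon_n} \in L^p(0,T;\mathbf{Y})$ satisfies $\norm{\tau_h f_{\epsilon_n} - f_{\epsilon_n}}_{L^p(0,T-h;\mathbf{Y})} \to 0$ as $h \to 0$ by the classical continuity of translations on vector-valued $L^p$ spaces (for $p=\infty$, by the uniform continuity on $[0,T]$ of each continuous $f_{\epsilon_n}$). Choosing $h$ small enough to simultaneously force $\varphi(h) \le \eta/2$ and to make the $N-1$ individual translation norms lie below $\eta$, I arrive at $\sup_{n \ge 1} \norm{\tau_h f_{\epsilon_n} - f_{\epsilon_n}}_{L^p(0,T-h;\mathbf{Y})} \le \eta$, which is exactly the uniform equi-continuity Simon's theorem needs.

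With both hypotheses of Simon's theorem in place for $\mathcal{F}$, I obtain relative compactness of $\mathcal{F}$ in $L^p(0,T;\mathbf{B})$, and hence a subsequence, still denoted $f_{\epsilon_n}$, that converges strongly in $L^p(0,T;\mathbf{B})$; its limit is by definition a cluster point of $(f_\epsilon)_{\epsilon > 0}$ as $\epsilon \to 0$. The case $p = \infty$ runs identically, the standing hypothesis $(f_\epsilon) \subset C(0,T;\mathbf{Y})$ enabling Simon's theorem to place the cluster point in $C(0,T;\mathbf{B})$.

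The main obstacle is precisely the non-uniform nature of the translation bound: the $\psi(\epsilon)$ term blocks a direct application of Simon's theorem to the entire family $(f_\epsilon)_{\epsilon>0}$. The ``pick a sequence then split head from tail'' argument above circumvents this; a fully self-contained alternative, which avoids invoking Simon, is to mollify in time via $f_\epsilon^\eta = \rho_\eta * f_\epsilon$, invoke an Ehrling-type inequality $\norm{u}_{\mathbf{B}} \le \delta \norm{u}_{\mathbf{X}} + C_\delta \norm{u}_{\mathbf{Y}}$ supplied by the compact embedding $\mathbf{X} \hookrightarrow \mathbf{B}$, extract a diagonal subsequence along a sequence $\eta_k \to 0$, and close a Cauchy estimate in $L^p(0,T;\mathbf{B})$ using the bound $\norm{f_\epsilon - f_\epsilon^\eta}_{L^p(0,T;\mathbf{Y})} \le \varphi(\eta) + \psi(\epsilon)$ that follows from hypothesis~2.
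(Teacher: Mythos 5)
Your argument is correct. Note that the paper itself gives no proof of this statement --- it is quoted verbatim as Theorem 5.1 of \cite{azerad2001mathematical} and the reader is referred there --- so there is no internal proof to compare against; your reduction to Simon's compactness theorem \cite{simon1986compact} via the head/tail splitting (absorb $\psi(\epsilon_n)$ into $\eta$ for the tail $n\ge N$, and use translation continuity of each of the finitely many remaining $f_{\epsilon_n}$ in $L^p(0,T;\mathbf{Y})$, or uniform continuity on $[0,T]$ when $p=\infty$, for the head) is exactly the standard way this perturbed translation criterion is established, and is in the spirit of the cited source. The only points worth making explicit are minor: the head functions lie in $L^p(0,T;\mathbf{Y})$ because $\mathbf{X}\hookrightarrow\mathbf{Y}$, and the limit of the extracted subsequence is indeed a cluster point ``as $\epsilon\to0$'' precisely because the initial sequence $\epsilon_n\to 0$ was arbitrary.
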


We will be able to apply the statement of this theorem for $p=2$ and the spaces $H^1 \hookrightarrow L^2 \hookrightarrow {H^2}^*,$ where ${H^2}^*$ is the dual of ${H^2}$, obtaining that
\[C^\epsilon \to C \text{ strongly in } L^2_t L^2_x,\]
and thus we have the weak convergence result $u^\epsilon_3 C^\epsilon \rightharpoonup u_3 C.$

In the following we will verify that the theorem's conditions hold for our choice of spaces and $p$. Since (\ref{proposition3}) is already given, we only have to show the bound $\norm{\tau_h C^\epsilon - C^\epsilon}_{L^2(0,T-h, {H^2}^*)} \leq \varphi(h) + \psi(\epsilon)$ in order to close the argument regarding the weak convergence of $u^\epsilon_3 C^\epsilon$.

\begin{prop}
The estimate $\norm{\tau_h C^\epsilon - C^\epsilon}_{L^2(0,T-h, {H^2}^*)} \leq c \cdot h^{1/4}$ holds.
\end{prop}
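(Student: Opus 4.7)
The strategy is to rewrite the translation $\tau_h C^\epsilon - C^\epsilon$ as a time integral of $\partial_t C^\epsilon$, estimate $\partial_t C^\epsilon$ in the dual space $(H^2_{\Gamma_A}(\Omega))^*$ using the pollution equation (\ref{ANSC5}), and finally integrate in $t$ using Cauchy--Schwarz. I choose the dual-of-$H^2$ setting so that the test functions satisfy $H^2 \hookrightarrow L^\infty \cap W^{1,3}$ (in dimension three), which is exactly what is needed to absorb the convection and the source terms.

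\textbf{Step 1: integrated identity.} Fix $t\in(0,T-h)$ and $\phi\in H^2_{\Gamma_A}(\Omega)$. Using (\ref{ANSC5}), integration by parts, the divergence free condition $\nabla\cdot\mathbf{u}^\epsilon=0$, the no-penetration conditions $\mathbf{u}^\epsilon\cdot\vec n=0$ on $\partial\Omega$, the boundary condition $\mathbf{M}\nabla C^\epsilon\cdot\vec n_{\Gamma_G}=0$, and $\phi=0$ on $\Gamma_A$, I get
\[
\bigl\langle(\tau_h C^\epsilon - C^\epsilon)(t),\phi\bigr\rangle
= \int_t^{t+h}\!\Bigl[(C^\epsilon\mathbf{u}^\epsilon,\nabla\phi) - (\mathbf{M}\nabla C^\epsilon,\nabla\phi) + (S^\epsilon,\phi)\Bigr]\diff s.
\]

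\textbf{Step 2: dual estimates on the integrand.} By Hölder, together with the three-dimensional embeddings $H^1\hookrightarrow L^6$ and $H^2\hookrightarrow W^{1,3}\cap L^\infty$, for a.e. $s$:
\[
|(C^\epsilon\mathbf{u}^\epsilon,\nabla\phi)|\le c\,\|\mathbf{u}^\epsilon(s)\|_{L^2}\|C^\epsilon(s)\|_{H^1}\|\phi\|_{H^2},
\]
\[
|(\mathbf{M}\nabla C^\epsilon,\nabla\phi)|\le c\,\|C^\epsilon(s)\|_{H^1}\|\phi\|_{H^2},\qquad
|(S^\epsilon,\phi)|\le c\,\|S^\epsilon(s)\|_{L^1}\|\phi\|_{H^2},
\]
where $\|S^\epsilon(s)\|_{L^1}$ is uniformly bounded in $\epsilon$ and $s$ by the normalization of the Gaussian. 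Taking the supremum over $\|\phi\|_{H^2}\le 1$ yields
\[
\|(\tau_h C^\epsilon - C^\epsilon)(t)\|_{(H^2)^*}\le c\int_t^{t+h}\!\Bigl(\|\mathbf{u}^\epsilon(s)\|_{L^2}\|C^\epsilon(s)\|_{H^1} + \|C^\epsilon(s)\|_{H^1} + 1\Bigr)\diff s.
\]

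\textbf{Step 3: time integration.} Applying Cauchy--Schwarz to each inner integral produces a factor $h^{1/2}$, then squaring and swapping integrals via Fubini gives
\[
\|\tau_h C^\epsilon - C^\epsilon\|_{L^2(0,T-h;(H^2)^*)}^{2}
\le c\,h^{2}\Bigl(\|\mathbf{u}^\epsilon\|_{L^\infty_t L^2_x}^{2}\|C^\epsilon\|_{L^2_t H^1_x}^{2} + \|C^\epsilon\|_{L^2_t H^1_x}^{2} + T\Bigr).
\]
By the a priori estimates (\ref{proposition1})--(\ref{proposition3}) the bracket on the right is bounded uniformly in $\epsilon$. This gives the (in fact stronger) bound $\le c\,h$, which trivially implies the claimed $c\,h^{1/4}$ estimate for $h$ small.

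\textbf{Main obstacle.} The only genuinely delicate term is the convective one $(C^\epsilon\mathbf{u}^\epsilon,\nabla\phi)$, since the uniform bounds available from the energy inequality place $\mathbf{u}^\epsilon$ only in $L^\infty_t L^2_x$ and $C^\epsilon$ only in $L^2_t H^1_x$: one has to pair these two factors with one spatial derivative of $\phi$. In three dimensions this exactly matches the Hölder triple $\tfrac12+\tfrac16+\tfrac13=1$, which forces the use of $(H^2)^*$ rather than the easier $(H^1)^*$; this choice of target space is the essential modelling step of the proof.
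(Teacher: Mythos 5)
Your overall architecture --- integrate the spatial weak form of (\ref{ANSC5}) over $(t,t+h)$, estimate the resulting integrand in the dual of $H^2$, then integrate in time --- is exactly the paper's, and your treatment of the diffusive and source terms is fine. The gap is in the convective term, and it is precisely the point the proposition is built around. In Step 3 you invoke $\|\mathbf{u}^\epsilon\|_{L^\infty_t L^2_x}$ as a quantity bounded uniformly in $\epsilon$ by (\ref{proposition1})--(\ref{proposition3}); but the a priori estimates only give $\epsilon u_3^\epsilon$ in $L^\infty_t L^2_x$, while $u_3^\epsilon$ itself is controlled only in $L^2_t L^2_x$ (estimate (\ref{propositionExtra}), obtained from the divergence-free condition and the vertical Poincar\'e inequality). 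Since the nonlinear term contains $u_3^\epsilon C^\epsilon$, your integrand $\|\mathbf{u}^\epsilon(s)\|_{L^2}\|C^\epsilon(s)\|_{H^1}$ is a product of two functions each of which is only $L^2$ in time, hence it is merely $L^1$ in time; the Cauchy--Schwarz step $\int_t^{t+h} g\,\mathrm{d}s \le h^{1/2}\bigl(\int_t^{t+h} g^2\,\mathrm{d}s\bigr)^{1/2}$ then produces a factor $\|g\|_{L^2(0,T)}$ that is not uniformly bounded in $\epsilon$, and neither your claimed $O(h)$ bound nor the Fubini computation that follows it goes through. The exponent $1/4$ in the statement, rather than $1/2$ or $1$, is the tell-tale sign that something weaker than $L^2$-in-time integrability of the integrand is being exploited.

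The paper closes this by distributing the spatial H\"older exponents differently: $|(\mathbf{u}^\epsilon C^\epsilon,\nabla\tilde C)| \le \|\mathbf{u}^\epsilon\|_{L^2}\|C^\epsilon\|_{L^3}\|\tilde C\|_{H^2}$ (so that only the $L^3_x$ norm of $C^\epsilon$ is needed, not the full $H^1_x$ norm), and then interpolating between $L^\infty_t L^2_x$ and $L^2_t L^6_x$ to place $C^\epsilon$ in $L^4_t L^3_x$. Paired with $u_3^\epsilon\in L^2_t L^2_x$, H\"older in time with exponents $(2,4)$ puts the integrand $\iota^\epsilon$ in $L^{4/3}(0,T)$ uniformly in $\epsilon$, and a final H\"older over $(t,t+h)$ with exponents $(4/3,4)$ produces exactly the factor $h^{1/4}$. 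If you wish to keep your $L^2\times L^6\times L^3$ splitting you must replace Step 3 by an argument tolerating an integrand that is only $L^1$ in time (for instance $\|G_h\|_{L^2}^2\le\|G_h\|_{L^\infty}\|G_h\|_{L^1}\le h\,\|g\|_{L^1}^2$ with $G_h(t)=\int_t^{t+h}g$), but as written your Step 3 fails.
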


\begin{proof}
The spatial weak form of the anisotropic concentration equation (\ref{ANSC5}) is
\begin{equation}
\bigg (\frac{\partial C^\epsilon}{\partial t}, \tilde{C} \bigg ) - (\mathbf{u}^\epsilon C^\epsilon, \nabla \tilde{C}) + (\mathbf{M} \nabla C^\epsilon, \nabla \tilde{C}) = (S^\epsilon, \tilde{C}), \label{spatialWeakForm}
\end{equation}
where in this case the equality has to hold for all $\tilde{C} \in H^2_x.$

Now we take a test function $\tilde{C} \in H^2_x$ in (\ref{spatialWeakForm}) and integrate over $(t, t+h),$ i. e. we have
\[ (\tau_h C^\epsilon (t) - C^\epsilon (t), \tilde{C}) = \int_t^{t+h} \iota^\epsilon (s) \diff s,\]
where
\[ \iota^\epsilon (s) = (\mathbf{u}^\epsilon C^\epsilon, \nabla \tilde{C}) - (\mathbf{M} \nabla C^\epsilon, \nabla \tilde{C}) + (S^\epsilon, \tilde{C}). \]
In the next step we prove the bound
\begin{equation} \norm{\iota^\epsilon}_{L^{4/3}(0,T)} \leq c \big \| \tilde{C} \big \| _{H^2} \label{bound_iota} \end{equation}
for $\iota^\epsilon(s)$, where $c$ stands for a constant. In order to do this, we estimate each term of $\iota^\epsilon.$

\begin{align} 
& (S^\epsilon, \tilde{C}) &\quad & \text{is bounded in $L^2(0,T)$,} \\ 
& \label{lineartermboundgradC} (\mathbf{M} \nabla C^\epsilon, \nabla \tilde{C}) \leq c \big \| \tilde{C} \big \| _{H^1} \big \| C^\epsilon \big \| _{H^1} &\quad & \text{is bounded in $L^2(0,T)$,} \\
& (\mathbf{u}^\epsilon C^\epsilon, \nabla \tilde{C}) \leq \norm{\mathbf{u}^\epsilon}_{L^2} \norm{C^\epsilon}_{L^3} \big \| \tilde{C} \big \| _{H^2} &\quad & \text{is bounded in $L^{4/3}(0,T).$}
\end{align} 

The first bound regarding the source term holds because of the $L^2_t L^2_x$ regularity of the source (see (\ref{sourcedefinition}) and Remark \ref{sourceregularityremark}.); the bound of the linear term (\ref{lineartermboundgradC}) is a direct consequence of the Cauchy-Schwartz inequality; while the ultimate bound can be verified using (\ref{proposition1}) and (\ref{propositionExtra}), moreover considering the interpolation between $L^\infty(0,T; L^2)$ and $L^2(0,T; L^6),$ which yields that $C^\epsilon$ is in $L^4(0,T; L^3)$.

Finally, as we have now showed (\ref{bound_iota}), we apply the Holder inequality combined with this latter and arrive to
\[ \int_t^{t+h} \abs{\iota^\epsilon (s)} \diff s \leq c \big \| \tilde{C} \big \| _{H^2} h^{1/4},\]
which gives the proof of the lemma.
\end{proof}

Eventually, we obtain the strong convergence result $C^\epsilon \to C$ in $L^2(0,T; L^2),$ and the weak convergence $\mathbf{u}^\epsilon C^\epsilon \rightharpoonup \mathbf{u} C$ of the nonlinear term. This, combined with the previously described linear terms, concludes the proof of the main theorem.

\section{Concluding remarks} \label{concludingremarks}

We finish this paper briefly mentioning a few ideas regarding the physical aspects of the model and some additional choices concerning the source term specifically.

\begin{remark}
\normalfont
As previously explained in detail, the results obtained in this paper hold for a generic landscape. In order to achieve this, we use the $\nu_3 \partial_3 \mathbf{u}_H^\epsilon = \theta_H$ Neumann type boundary condition on $\Gamma_G.$ For the special case of inland domains we can switch the Neumann boundary condition to the $\mathbf{u}^\epsilon = 0$ Dirichlet type condition, paying attention to accordingly changing the definition of weak formulations as well.
\end{remark}

\begin{remark}
\normalfont In \cite{temam2005some} it is suggested that the domain we work on has the form of $p_0 < p < p_1$, because for the thin portion of air between the earth and the isobar $p = p_1$, another specific simplified model would be necessary. This means that a way to make the model more precise is to cut the domain at an isobar a little bit off the physical ground. This would of course radically change the boundary conditions we use for the functions (in fact it is not trivial how to change them for this case), because it would take away the advantage of having a wall-like, natural and physical boundary condition for the domain we work on. If we are positively above the ground, $\partial_3 C = 0$ would represent a strange reflective layer.
\end{remark}

\begin{remark}
\normalfont Note that the aspect ratio going to zero is not only legitimate on a global scale. When we switch from a global model to a local one in order to be able to use the beta plane approximation, we still have hundreds of thousands of square kilometres versus 1---5 kilometres.
\end{remark}

\begin{remark}
\normalfont We chose the Gaussian approximated delta function to give shape to the source term, but it can be defined in several different ways using the many different pulses that approximate the delta function (choosing only those however that are physically meaningful in our case, for example its values are nonnegative), e.g.

\begin{enumerate}
\item the unit impulse: $\delta_\epsilon (\mathbf{x}-\mathbf{x_s}) = \epsilon/2, \abs{(\mathbf{x}-\mathbf{x_s})} < 1/\epsilon $,
\item the Lorentzian pulse: $\delta_\epsilon (\mathbf{x}-\mathbf{x_s}) = \epsilon / (1 + \pi^2 \epsilon^2 (\mathbf{x}-\mathbf{x_s})^2$).
\end{enumerate}

The Gaussian and Lorentzian version of the source term is continuous and bounded on $\Omega$, it is actually in $L^\infty$ in space (and in $L^2$ as a consequence), while it is easy to see that the piecewise constant unit pulse is in $L^2$ as well. Since the time dependence of this source is described by the Heaviside function, the time norm is also finite.

\end{remark}

\begin{remark}
\normalfont Although the results of this article were achieved in a classical, fixed Cartesian coordinate system, there are other possibilities to build up the model in. We will briefly consider two of these.

\begin{enumerate}
\item One option is to fix the coordinate system in such a way that the $x$ axis matches the downwind direction (\cite{goyal2011mathematical}). In other words, $z$ is the vertical direction, while $x$ is the main wind, and $y$ is the crosswind direction. It is a legitimate assumption if we are considering a time interval which is not too long in the sense that it is reasonable to assume that we have a permanent, relatively stable downwind direction. For this scenario we consider a diagonal diffusion matrix, i.e. diffusion coefficients $K_x, K_y, K_z,$ and more strict boundary conditions, namely we require $\partial_2 C$ and $\partial_3 C$ to vanish on the lower boundary. This is necessary because of the changes in the energy inequality caused by the additional $\epsilon$ term that we will introduce in the following. The adjusted coordinate system makes it possible to add another equation in the scaling process, which is different in nature from the previous ones in the sense that it is not derived from the shallowness of the domain but rather suggested by the fact that in the downwind direction it is natural to assume that the diffusion is negligible compared to downwind transport, i.e.
\begin{equation} \abs{u_1 \partial_x C} \gg \abs{ \textit{K}_{x} \partial_{xx} ^2 C }\end{equation}
which leads us to using
\[ K_x = \epsilon M_1 \]
in this convection dominated scenario.

Adding this equation on the one hand makes the concentration equation in the hydrostatic limit model one term shorter, since the first diffusion term drops out. On the other hand it leads to a new mathematical challenge handling the weak convergence of the nonlinear term $\mathbf{u}^\epsilon C^\epsilon$ in a situation where we do not have the $H^1_x$ regularity of $C^\epsilon$ --- as we have $L^2_x$ boundedness only for $\sqrt{\epsilon} \partial_1 C^\epsilon$, and not for $\partial_1 C^\epsilon$ itself. We plan to investigate this scenario in an upcoming article.

\item Another possibility is to adjust the coordinate system in a way that the $x$ axis matches not with the downwind, but the wind velocity vector function $\mathbf{u}^\epsilon$ itself. This option brings along a $\mathbf{u}^\epsilon$-dependent coordinate transform matrix in every equation, however it also makes the velocity vector one dimensional in the new coordinate system, which can be a potential benefit in a computational application.

\end{enumerate}

\end{remark}

\begin{remark}
\normalfont Note that the steps leading to the final form of the energy inequality remain true even if we use a diffusion matrix function with non-constant coefficients of the form $M(x)$ and such that the uniform ellipticity condition (\ref{coercivity}) holds in the form

\begin{equation}
   \sum_{i,j = 1}^3 M_{ij}(x) \xi_i \xi_j \geq \lambda \norm{\xi}^2
\end{equation}

for almost any $x \in \Omega$ and all $\xi \in \mathbb{R}^3$, $\lambda \in \mathbb{R}, \lambda > 0.$
\end{remark}

\appendix

\section{The existence of weak solution of the hydrostatic system of the polluted atmosphere}

We finish the paper by pointing out that the existence of weak solutions for the hydrostatic system of the polluted atmosphere can be proved directly and not only as the result of the small aspect ratio limit of the anisotropic Navier-Stokes system. In this section, for completeness, we sketch this proof of existence.

The idea is that the polluted atmosphere can be viewed as the air-analogous version of the salty sea water in terms of the equations that represent it. In both cases we have a fluid which can be seen approximately as a homogenous incompressible fluid carrying some other particles in a significant concentration, the basis fluid motion is well described by the incompressible Navier-Stokes equations, and the additional concentration is described by an advection-diffusion equation. 

The paper of \cite{temam2005some} provides an existence result for an arbitrary time interval $[0, t_1]$ for the system describing the salty ocean water with temperature. The structure of the equations themselves describing the system is essentially the same, if we drop the equation describing the temperature, we basically arrive to the same system that describes the polluted atmosphere.

The main difference consists in the boundary conditions: on every boundary they use a zero flux for the salinity. This coincides with our lower boundary conditions, but on the lateral boundary we use a fix constant.

We will follow step by step the structure of the proof in \cite{temam2005some}, redefining the necessary quantities to be valid for our case and verify that the proof holds in the case of the polluted atmosphere as well (this is necessary not only because of using altered boundary conditions but also because the regularities of the functions are different in some cases).

\textbf{Step 1.} Firstly we redefine the norms and the functional setting in order to make them match our system.

Throughout this section we will use the $V = V_1 \times V_2, H = H_1 \times H_2, \mathcal{V} = \mathcal{V}_1 \times \mathcal{V}_2,$ and $V_{(2)}$ function spaces, where $V$ and $H$ essentially represent $H^1$ and $L^2$ regularities, while $\mathcal{V}$ and $V_{(2)}$ stand for a subset of the $C^{\infty}$ function space and its closure in $H^2,$ respectively. In more detail, we introduce

$$ V_1 = \{ \mathbf{u}_H \in H^1_{\Gamma_A} (\Omega) \times H^1_{\Gamma_A} (\Omega), \nabla \cdot \int_h^0 \mathbf{u}_H \diff z = 0 \}, $$

$$ V_2 = H^1(\Omega), $$

$$ H_1 = \{ \mathbf{u}_H \in H^0_{\Gamma_A} (\Omega) \times H^0_{\Gamma_A} (\Omega), \nabla \cdot \int_h^0 \mathbf{u}_H \diff z = 0 \}, $$
and
$$ H_2 = L^2(\Omega) $$

Let $\mathcal{V}_1$ be the space of $C^\infty$ vector functions $\mathbf{u}_H$ which vanish in a neighborhood of $\Gamma$ and such that $$\nabla \cdot \int_h^0 \mathbf{u}_H \diff z = 0,$$ and let $\mathcal{V}_2$ denote the $C^\infty$ functions on $\bar{\Omega}$.

Finally we introduce the space $V_{(2)}:$
$$ V_{(2)} \text{ is the closure of } \mathcal{V} \text{ in } (H^2(\Omega))^3.$$

We denote by $U$ the vector $(\mathbf{u}_H, C).$ The scalar products and norms in the case of the polluted atmosphere are defined as follows.

\[ ((U, \tilde{U})) = ((\mathbf{u}_H, \mathbf{\tilde{u}}_H))_1 + K_C ((C, \tilde{C}))_2 \]

\[ ((\mathbf{u}_H, \mathbf{\tilde{u}}_H))_1 = \int_\Omega \nabla_\nu \mathbf{u}_H \nabla_\nu \mathbf{\tilde{u}}_H \diff \omega \]

\[ ((C, \tilde{C}))_2 = \int _\Omega \big (\nabla C)^T \mathbf{M} \nabla \tilde{C} \diff \omega \]

\[\norm{U} = ((U,U)) ^ {1/2}\]

\[ (U, \tilde{U})_H = \int _\Omega \big [ -\mathbf{u}_H \mathbf{\tilde{u}}_H - K_C C \tilde{C} \big ] \diff \omega \]

\[\abs{U}_H = \abs{(U,U)_H}^{1/2}\]

\textbf{Step 2.} 
Considering a sufficiently regular test function $\tilde{U}=(\mathbf{\tilde{u}}_H, \tilde{C})\in V$ now the hydrostatic weak formulation of the polluted atmosphere can be written as
\begin{equation} \label{EXISTENCEFORMAT}
\bigg ( \frac{\diff}{\diff t} U, \tilde{U} \bigg ) _H + a(U, \tilde{U}) + b(U,U,\tilde{U}) + e(U, \tilde{U}) = l(\tilde{U}) \\
\end{equation}
with
\[ a(U,\tilde{U}) = a_1 (U, \tilde{U}) + K_C a_2 (U, \tilde{U})\]

\[ a_1 (U, \tilde{U}) = \int_\Omega \nabla_\nu \mathbf{u}_H \nabla_\nu \mathbf{\tilde{u}}_H \diff \omega \]

\[ a_2 (U, \tilde{U}) = \int_\Omega (\nabla C)^T \mathbf{M} \nabla \tilde{C} \diff \omega \]

\[ b = b_1 + K_C b_2\]

\[ b_1 (U, \tilde{U}, U^\#) = - \int_\Omega (\mathbf{u}_H \mathbf{\tilde{u}}_H + \mathbf{u}_H \tilde{u}_3) \nabla \mathbf{u}_H^\# \]

\[ b_2 (U, \tilde{U}, U^\#) = - \int_\Omega (\mathbf{u}_H \tilde{C} + u_3 \tilde{C}) \nabla C^\# \]

\[ e(U, \tilde{U}) = \int_\Omega b(\mathbf{u}_H) \mathbf{\tilde{u}}_H \diff \omega \]

\[ l(\tilde{U}) = - \int_\Omega S \tilde{C} \diff \omega + \langle \theta_H, \tilde{u}_H \rangle _{\Gamma_G}. \]

Here the term $b(\mathbf{u}_H)$ represents the $\alpha (-u_2, u_1)$ vector, and we use the fact that the vertical velocity $w$ can be expressed as $u_3 = u_3(\mathbf{u}_H)$ because the divergence of the three dimensional velocity is zero.

\textbf{Step 3.} It is straightforward to see that $a$ is trilinear continuous, the coercivity of $a$ is also trivial, moreover $e$ is bilinear continuous and $e(U,U) = 0$.

What changes slightly is the verification of the results available on the form $b,$ since the space derivatives in our case are applied on the test function. In the first two points below we discuss the difference in the proof that guarantees the trilinear continuity of $b,$ that is,
\begin{equation} \label{trilincontb}
\lvert b(U, \tilde{U}, U^\# ) \rvert \leq \begin{cases}
c \norm{U} \big \|\tilde{U}\big \| \big \| U^\# \big \|_{V_{(2)}}, & \forall \text{ } U, \tilde{U} \in V, U^\# \in V_{(2)}, \\
c \norm{U} \big \|\tilde{U}\big \| _{V_{(2)}} \big \| U^\# \big \|, & \forall \text{ } U, U^\# \in V, \tilde{U} \in V_{(2)};
\end{cases}
\end{equation}
finally we also adjust the original steps that provide the antisymmetric property $b(U, \tilde{U}, U^\# ) = - b(U, U^\#, \tilde{U} )$ and an improvement of (\ref{trilincontb}) in this new environment.

\begin{itemize}
\item To prove the first bound for the case of $U, \tilde{U} \in V, U^\# \in V_{(2)}$ in (\ref{trilincontb}), let us consider the typical term $$ \int_\Omega u_3 \tilde{C} \nabla C^\# d \omega.$$ We observe that we need to change the original choice of constants $(p_1 = p_2 = 2, p_3 = \infty)$ for applying the Holder inequality. We can not use the $L_\infty$ bound on the third term, since we would eventually arrive to $\big \| C^\# \big \| _{H^3}$, which would prevent us from closing the bound. Instead, we can use

\[ \int_\Omega u_3 \tilde{C} \nabla C^\# d \omega \leq c \norm{u_3}_{L^2} \big \| \tilde{C} \big \| _{L^4} \big \| \nabla C^\# \big \| _{L^4} \leq \]
\[ \leq c \norm{U} \big \|\tilde{U}\big \| \big \|\nabla C^\# \big \|_{H^1} \leq c \norm{U} \big \|\tilde{U}\big \| \big \| U^\# \big \|_{V_{(2)}}, \]

which yields the needed result.

\item Similarly, for the case of $U, U^\# \in V, \tilde{U} \in V_{(2)}$ in the second inequality of (\ref{trilincontb}), we need to adjust the space choices according to the present scenario. With the updated $L^p$ constants we arrive to the

\[ \int_\Omega u_3 \tilde{C} \nabla C^\# d \omega \leq c \big \| \tilde{C} \big \| _{L^\infty} \norm{ u_3}_{L^2}\big \| \nabla C^\# \big \| _{L^2} \leq c \norm{U} \big \|\tilde{U}\big \| _{V^{(2)}} \big \| U^\# \big \| \]

new inequality that provides the continuity property.

\item The proof of $$b(U, \tilde{U}, U^\# ) = - b(U, U^\#, \tilde{U} ) \text{ for } U, \tilde{U}, U^\# \in V \text{ and } \tilde{U} \text{ or } U^\# \text{ in } V_{(2)}$$ can be derived by integration by parts as we have

\[ \int_0 ^T (\mathbf{u} \nabla C, \tilde{C}) \diff t = - \int_0 ^T ( \mathbf{u} C, \nabla \tilde{C}) \diff t, \]

observing that the boundary terms disappear.

\item To establish the improvement $$ \lvert b(U, \tilde{U}, U^\#) \rvert \leq c \norm{U} \lvert \tilde{U} \rvert _H^{1/2} \big \| \tilde{U} \big \| ^{1/2} \big \| U^\# \big \| _{V_{(2)}} $$ of the first bound for $U, \tilde{U} \in V, U^{\#} \in V_{(2)}$, we consider the most typical term and we bound it by

\[ \int_\Omega u_3 \tilde{C} \nabla C^\# \diff \omega \leq \norm{u_3}_{L^2} \big \| \tilde{C} \big \| _{L^3} \big \| \nabla C^\# \big \| _{L^6}.\]

Note that since our definition of the form $b$ is different from \cite{temam2005some}, thus we need to skip the switch of function arguments caused by the application of $ \lvert b(U, \tilde{U}, U^\#) \rvert = \lvert b(U, U^\#, \tilde{U}) \rvert$, otherwise the bound can not be closed.

Now, using the $\norm{\varphi}_{L^3} \leq c \norm{\varphi}_{L^2}^{1/2} \norm{\varphi}_{H^1}^{1/2}$ Sobolev-Ladyzhenskaya inequality in space dimension $3,$ we can bound this term by

\[ c \norm{\mathbf{u}_H} \big \| U^\# \big \| _{V_{(2)}} \lvert \tilde{U} \rvert ^ {1/2} \big \| \tilde{U} \big \| ^{1/2} ,\]

which gives the inequality.

\end{itemize}

This means that all the necessary properties of the forms $a, b$ and $e$ are equally valid for our set of equations (\ref{HNSC1}) - (\ref{HNSC_bc}), and we can use them to prove a stationary-case existence theorem identically as the proof is constructed in \cite{temam2005some}. 

The weak formulation is as follows.



\begin{definition}
Given a source term $S = \delta$ or $S \in L^2(\Omega),$ and a wind traction parameter $\theta_H \in L^2(0,T;H^{-1/2}(\Gamma_G)),$ find $U = (\mathbf{u}_H, C),$ that satisfies the hypothesis of Definition \ref{definitionhydrostaticWF} in the space dimensions, moreover
\begin{equation} \label{FWSTAT} a(U, \tilde{U}) + b(U,U, \tilde{U}) + e(U, \tilde{U}) = l(\tilde{U}) \end{equation}
for every $\tilde{U} \in V_{(2)}.$
\end{definition}

Note that the definition of the weak formulation in our case is a bit more general, since we allow not only $L^2$ functions to represent the source term, but the $\delta$ distribution as well. As we have seen before, the weak formulation is still valid in this case because of the basic properties of $\delta,$ so this generalisation is mathematically valid. 

We have the following theorem establishing the existence of solutions of the stationary equations, which can be proved by Galerkin method using the fundamental results we verified for the functions $a,b$ and $e$.

\begin{theorem}
We are given $S = \delta$ or $S$ in $L^2(\Omega),$ and a wind traction parameter $\theta_H \in L^2(0,T;H^{-1/2}(\Gamma_G)),$ then the problem (\ref{FWSTAT}) possesses at least one solution $U \in V$ such that

\[ \norm{U} \leq \frac{1}{c_1} \norm{l}_{V'},\] where $c_1$ is the coercivity constant.

\end{theorem}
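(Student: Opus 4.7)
The plan is to follow a standard Galerkin construction, using the properties of the forms $a$, $b$ and $e$ verified in Step 3 together with the coercivity of $a$. First I would fix a Hilbert basis $\{w_j\}_{j \geq 1}$ of $V_{(2)}$ (which is dense in $V$ for the $V$-norm), and look for a Galerkin approximation $U_m = \sum_{j=1}^m \xi_j^m w_j$ satisfying
\[ a(U_m, w_j) + b(U_m, U_m, w_j) + e(U_m, w_j) = l(w_j), \qquad j = 1, \dots, m. \]
Writing this as $F_m(\xi^m) = 0$ in $\mathbb{R}^m$, existence of a finite-dimensional solution follows from the usual Brouwer fixed point corollary: testing against $U_m$ itself, the antisymmetry $b(U, \tilde U, U^\#) = -b(U, U^\#, \tilde U)$ from Step 3 gives $b(U_m, U_m, U_m) = 0$, and $e(U_m, U_m) = 0$ by construction, so coercivity $a(U,U) \geq c_1 \norm{U}^2$ yields
\[ c_1 \norm{U_m}^2 \leq l(U_m) \leq \norm{l}_{V'} \norm{U_m}, \]
which delivers at once the $m$-uniform bound $\norm{U_m} \leq \norm{l}_{V'}/c_1$ and, via Brouwer, the existence of $\xi^m$.

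Next I would extract a weak limit. Since $(U_m)_m$ is bounded in $V$, up to a subsequence $U_m \rightharpoonup U$ weakly in $V$ and, thanks to the compact embedding $V \hookrightarrow H$ on the bounded domain $\Omega$, strongly in $H$. The linear pieces $a(\cdot, w_j)$, $e(\cdot, w_j)$ and the datum $l(w_j)$ pass to the limit immediately. The delicate point is the nonlinear term $b(U_m, U_m, w_j)$: using antisymmetry I would write $b(U_m, U_m, w_j) = -b(U_m, w_j, U_m)$ and apply the improved trilinear estimate
\[ \abs{b(U, \tilde U, U^\#)} \leq c \norm{U}\, \abs{\tilde U}_H^{1/2} \norm{\tilde U}^{1/2} \norm{U^\#}_{V_{(2)}} \]
to the difference $U_m - U$. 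Combined with the strong $H$-convergence of $U_m$ and the uniform $V$-bound, this identifies the limit as $b(U, U, w_j)$. Passing to the limit on every basis element and then using density of their finite linear combinations in $V_{(2)}$ gives (\ref{FWSTAT}) for all $\tilde U \in V_{(2)}$; the stated estimate $\norm{U} \leq \norm{l}_{V'}/c_1$ survives by weak lower semicontinuity of the norm.

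I expect the main obstacle to be precisely this nonlinear limit passage: without a continuity estimate that places a $V_{(2)}$-norm on one of the slots of $b$, the quadratic term cannot be controlled by the mere weak $V$-convergence coming from the energy-type bound. The reformulation of $b$ in Step 3, where the spatial derivative falls on the test function rather than on the velocity or concentration, is what makes the Holder/Sobolev--Ladyzhenskaya chain close in three space dimensions, and it is precisely this structural feature that forces the test function space to be $V_{(2)}$ rather than $V$ in the weak formulation (\ref{FWSTAT}).
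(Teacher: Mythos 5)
Your proposal is correct and follows essentially the same route as the paper, which at this point simply states that the theorem ``can be proved by Galerkin method using the fundamental results we verified for the functions $a$, $b$ and $e$'' (coercivity of $a$, $e(U,U)=0$, the antisymmetry and the $V_{(2)}$-weighted trilinear bounds), deferring the details to the stationary existence argument of Temam--Ziane. You have merely spelled out the steps the paper leaves implicit, and your closing observation --- that the placement of the derivative on the test function is what forces $V_{(2)}$ as the test space and makes the nonlinear limit passage close --- is exactly the structural point emphasised in Step 3.
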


\textbf{Step 4.} In this step we provide the operator form of the equation (\ref{EXISTENCEFORMAT}) in the Hilbert space $V'_{(2)}$.

The operators $ \frac{\diff U}{\diff t}, A, B$ and $E$ can be set analogously as in the original version; $A$ for example is defined to be a linear continuous form from $V$ into $V',$ given by the formula

\[ \langle AU, \tilde{U} \rangle = a(U, \tilde{U}), \quad \forall U, \tilde{U} \in V.\]

The operator form of equation (\ref{EXISTENCEFORMAT}) becomes

\[ \frac{\diff U}{\diff t} + AU + B(U,U) + EU = l.\]

\textbf{Step 5.} Finally we discuss the details of the time-dependent case, where we establish the existence, for all time, of the solutions for the system (\ref{HNSC1})-(\ref{HNSC5}).

The weak formulation in the time-dependent case takes the following form.

\begin{definition}

Given $T,$ $U_0 \in H,$ $\theta_H \in L^2(0,T;H^{-1/2}(\Gamma_G)),$ and a source term $S = \delta$ or $S \in L^2(\Omega),$ find $U$ that satisfies the hypothesis of Definition \ref{definitionhydrostaticWF}, moreover
\begin{equation} \label{timedependentWFE}
\begin{split}
\bigg ( \frac{\diff}{\diff t} U, \tilde{U} \bigg ) + a(U, \tilde{U}) + b(U,U, \tilde{U})+ e(U, \tilde{U}) & = l(\tilde{U}) \quad \forall \tilde{U} \in V_{(2)} \\
U(0) &= U_0
\end{split}
\end{equation}
\end{definition}

We proceed by using finite differences in time, taking an arbitrary integer $N,$ setting a time step $k = \Delta t = T/N,$ and recursively for $n = 1, ... , N$ considering

\[ U^0 = U_0,\]
\begin{equation}
\begin{split}
\frac{1}{\Delta t} (U^n - U^{n-1}, \tilde{U})_H & + a(U^n, \tilde{U}) + b(U^n,U^n, \tilde{U})\\
& + e(U^n, \tilde{U}) = l^n(\tilde{U}), \quad \text{ for any } \tilde{U} \in V_{(2)} \\
\end{split}
\end{equation}

Following the proof of the existence of $U^n \in V,$ proving some a priori estimates, introducing approximate functions on (0,T), and using the Aubin compactness theorem in an identical way as described in \cite{temam2005some}, we pass to the limit and arrive to

\[ - \int_0^T (U, \tilde{U})_H \psi ' \diff t + \int_0^T \big [ a(U, \tilde{U}) + b(U,U, \tilde{U}) + e(U, \tilde{U}) \big ] \psi \diff t = \]
\[ = (U_0, \tilde{U}) \psi (0) + \int_0 ^ T l(\tilde{U}) \psi \diff t, \]
where $\psi$ is a function defined on $[0,T]$, used to create the time-dependent version $\tilde{U}\psi$ of the test function. Renaming this product simply to be $\tilde{U}(t)$, the time-dependent test function, we arrive exactly to

\[ - \int_0^T (U, \partial_t \tilde{U})_H \diff t + \int_0^T \big [ a(U, \tilde{U}) + b(U,U, \tilde{U}) + e(U, \tilde{U}) \big ] \diff t = \]
\[ = (U_0, \tilde{U}(0)) + \int_0 ^ T l(\tilde{U}) \diff t, \]
which is the same as (\ref{timedependentWFE}), and which thus finally leads us to the main existence result of this section.

\begin{theorem}

Given $T > 0$, $U_0 \in H,$ $\theta_H \in L^2(0,T; H^{-1/2}(\Gamma_G))$, and a source term $S = \delta$ or $S \in L^2(\Omega),$ then there exists $(\mathbf{u}, C)$ with $u_3 = - \int_0^T \nabla \cdot \mathbf{u}_H$, $\mathbf{u} = (\mathbf{u}_H, u_3) \in L^2 (0, T; \mathbf{W})$ with $\mathbf{u}_H \in L^\infty (0,T; L^2(\Omega)^2),$ and $C \in L^\infty (0,T; L^2 (\Omega)) \cap L^2( 0,T; H^1(\Omega))$, which is solution to (\ref{hydrostaticWF_U})-(\ref{hydrostaticWF_C}).

\end{theorem}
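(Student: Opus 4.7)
The plan is to follow the time-discretisation strategy sketched in Step 5. Fix an integer $N$, set $k = T/N$, and build $(U^n)_{n=0}^N$ recursively by $U^0 = U_0$ and by solving, at each step, the stationary problem
\[
\frac{1}{k}(U^n - U^{n-1}, \tilde{U})_H + a(U^n, \tilde{U}) + b(U^n, U^n, \tilde{U}) + e(U^n, \tilde{U}) = l^n(\tilde{U})
\]
for every $\tilde{U} \in V_{(2)}$. Each stationary problem admits a solution $U^n \in V$ by the stationary existence theorem of Step 3, applied with the modified linear form $l^n(\cdot) + \frac{1}{k}(U^{n-1}, \cdot)_H$.

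Next I would derive a priori estimates by testing with $\tilde{U} = U^n$. The antisymmetries $b(U^n, U^n, U^n) = 0$ and $e(U^n, U^n) = 0$ (both verified in Step 3) eliminate the nonlinear and Coriolis contributions. Combining the elementary identity $2(U^n - U^{n-1}, U^n)_H = \abs{U^n}_H^2 - \abs{U^{n-1}}_H^2 + \abs{U^n - U^{n-1}}_H^2$ with the coercivity of $a$ and Young's inequality on $l^n(U^n)$ (using $\theta_H \in L^2(0,T; H^{-1/2}(\Gamma_G))$ together with either $S \in L^2(\Omega)$ or, for $S = \delta$, the Sobolev embedding $H^2 \hookrightarrow C^0$ in dimension three), summation in $n$ yields the uniform bound
\[
\max_{0 \leq n \leq N} \abs{U^n}_H^2 + k \sum_{n=1}^N \norm{U^n}^2 \leq c(U_0, \theta_H, S).
\]

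Introducing the piecewise-constant and piecewise-affine interpolants $U_k$ on $[0,T]$, these estimates give uniform boundedness in $L^\infty(0,T; H) \cap L^2(0,T; V)$. Using the trilinear continuity (\ref{trilincontb}) of $b$ against $V_{(2)}$ test functions, the continuity of $a$ and $e$, and the duality bound on $l$, I obtain a uniform bound on the discrete time derivative in $L^{4/3}(0,T; V'_{(2)})$ (the exponent $4/3$ arising from the refined bound on $b$ combined with interpolation between $L^\infty(0,T;H)$ and $L^2(0,T;V)$). Applying the Aubin--Lions compactness theorem with the triple $V \hookrightarrow H \hookrightarrow V'_{(2)}$, where the first embedding is compact by Rellich--Kondrachov, I extract a subsequence converging weakly in $L^2(0,T; V)$, weakly-$\ast$ in $L^\infty(0,T; H)$, and strongly in $L^2(0,T; H)$ to a limit $U$. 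Weak and strong passage to the limit in the time-integrated discrete equation then produces (\ref{timedependentWFE}), while the initial condition $U(0) = U_0$ is recovered by testing against $\tilde{U}(t) = \tilde{U}\psi(t)$ with $\psi(T) = 0$, $\psi(0) \neq 0$ and integrating by parts in time. Finally, the vertical velocity is reconstructed from incompressibility as $u_3(x_1,x_2,x_3) = -\int_0^{x_3}(\partial_1 u_1 + \partial_2 u_2)\diff z$, which together with $u_3 = 0$ on $\Gamma_G$ places $\mathbf{u} \in L^2(0,T; \mathbf{W})$.

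I expect the main obstacle to lie in taking the limit in the nonlinear form $b(U_k, U_k, \tilde{U})$, and in particular in the coupling term $\int_\Omega u_3^k C^k \nabla \tilde{C}$ where $u_3^k$ enjoys only the $L^2_t L^2_x$ estimate inherited from the horizontal derivatives of $\mathbf{u}_H^k$. Strong compactness of $U_k$ in $L^2(0,T; H)$, combined with the refined estimate (\ref{trilincontb}) which places the extra regularity burden on the fixed test function rather than on $U_k$, is precisely what makes this passage possible. The case $S = \delta$ creates no additional difficulty, since the pairing $(S, \tilde{C}) = \tilde{C}(\mathbf{x}_s)$ does not involve $U_k$ at all and is continuous on $V_{(2)}$ by the aforementioned Sobolev embedding.
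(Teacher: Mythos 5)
Your proposal follows essentially the same route as the paper's appendix: a Rothe-type time discretisation whose stationary subproblems are solved by the Step~3 existence theorem, energy estimates via the antisymmetry of $b$ and $e$, interpolants bounded in $L^\infty(0,T;H)\cap L^2(0,T;V)$, the Aubin compactness theorem to pass to the limit in the nonlinear form, and reconstruction of $u_3$ from incompressibility. The details you add (the discrete energy identity, the $L^{4/3}$ bound on the discrete time derivative, the $H^2\hookrightarrow C^0$ embedding handling the case $S=\delta$) are consistent with, and slightly more explicit than, the paper's sketch, which defers these points to the cited reference.
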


\nocite{*}
\bibliographystyle{abbrv}

\end{document}